\renewcommand{\epsilon}{\varepsilon}
\renewcommand{\phi}{\varphi}
\renewcommand{\emptyset}{\varnothing}
\DeclareMathOperator{\End}{End}
\newcommand{\abs}[1]{\left| #1 \right|}
\newcommand{\norm}[1]{\left\lVert #1 \right\rVert}
\newcommand{\scal}[2]{\left\langle #1 , #2 \right\rangle}
\renewcommand{\Re}{\operatorname{Re}}
\renewcommand{\Im}{\operatorname{Im}}
\newcommand{\rank}{{\rm rank\,}}
\renewcommand{\P}{\mathbb{P}}
\newcommand{\sotto}{B}
\newcommand{\rend}{\End(E)}
\newcommand{\qe}{\mathbb{Q}}
\newcommand{\C}{\mathbb{C}}
\newcommand{\R}{\mathbb{R}}
\newcommand{\Z}{\mathbb{Z}}
\newcommand{\Oo}{{\mathcal{O}}}
\newcommand{\Q}{\mathbb{Q}}
\newcommand{\Ci}{\mathcal{C}}
\newcommand{\Di}{\mathcal{D}}
\newcommand{\cquattro}{C}
\newcommand{\ccinque}{C}
\newcommand{\csei}{C_0}
\newcommand{\cdodici}{c_{1}}
\newcommand{\ctredici}{c}
\newcommand{\cdiciassette}{c_{2}}
\newcommand{\cuno}{c_3}
\newcommand{\cdue}{c_4}
\newcommand{\Cuno}{C_1}
\newcommand{\Cdue}{C_2}
\newcommand{\Ctre}{C_{4}}
\newcommand{\Cquattro}{C_{3}}
\newcommand{\Ccinque}{C_5}
\newcommand{\Csei}{C_6}
\newcommand{\Csette}{C_7}
\newcommand{\Cotto}{C_8}
\newcommand{\Duno}{D_1}
\newcommand{\Ddue}{D_2}
\newcommand{\Dtre}{D_{3}}
\newcommand{\Dcinque}{D_{4}}
\newcommand{\Dsei}{D_{5}}
\newcommand{\Dsette}{D_{6}}
\newcommand{\csette}{c_{6}}
\newcommand{\cotto}{c_{7}}
\newcommand{\cnove}{c_{8}}
\newcommand{\cquattordici}{c_{12}}
\newcommand{\cquindici}{c_{11}}
\newtheorem{thm}{Theorem}[section]
\newtheorem{propo}[thm]{Proposition}
\newtheorem{lem}[thm]{Lemma}
\newtheorem{cor}[thm]{Corollary}
\newtheorem{D}[thm]{Definition}
\newtheorem{remark}[thm]{Remark}
\title[]{Explicit height bounds for $K$-rational points on transverse curves in powers of elliptic curves}
\author{F. Veneziano and E. Viada}
\subjclass[2010]{Primary 11G50, Secondary 14G40}
\begin{document}


\begin{abstract} 
Let  $\Ci$ be an algebraic  curve embedded transversally in a power  $E^N$ of an elliptic curve $E$. In this article we produce a good explicit bound for the height of all the algebraic points on $\Ci$ contained in the union of all proper algebraic subgroups of $E^N$. The method gives a totally explicit version of the Manin-Dam'janenko Theorem in the elliptic case and it is a generalisation of previous results only proved when $E$ does not have Complex Multiplication.\end{abstract}
\maketitle

\section{Introduction}

The Mordell Conjecture, proved by Faltings  (\cite {FaltingsTeo}), states  that an algebraic  curve of genus at least $2$ defined over a number field $k$ has only finitely many $k$-rational points. 
As it is well known, Faltings'  proof is not effective  in the sense that it does not give any information on how these points could be determined. 
The main known effective methods for finding rational points on algebraic curves are the Chabauty-Coleman method  and the  Manin-Dem'janenko method.


Unfortunately though, these methods do not express the bound for the height of the $k$-rational points as a  formula in terms of the curve. Thus, in the applications, such a dependence must be elaborated on a case-by-case basis with \textit{ad hoc} strategies and this has been carried out successfully only for some special families of curves with small genus, typically $2$ or $3$. (We refer to the introduction of \cite{EsMordell} for an account on the subject). \\

In this article we generalise an explicit method that we introduced in \cite{EsMordell} investigating its strength and its limits. More precisely we give a simple formula for the height of the points  in the CM and non-CM case and for rank larger than 1. We are  then successful in finding the $k$-rational (and not only rational) points on  some families of curves, with growing genus. The method  has its roots in  the theory of anomalous intersections introduced by Bombieri, Masser and Zannier.

To discuss these results we first fix the general setting and terminology (see  Section \ref{SezioneAltezze} for more details). By variety we mean an algebraic variety defined over the algebraic numbers embedded in some projective space. For $k$ a number field and $V$ a variety defined over $k$, we denote by $V(k)$ the set of $k$-rational points on $V$. We denote by $E$ an elliptic curve  and for any positive integer $N$ we denote by $E^N$ the cartesian product of $N$ copies of $E$.
We say that a subvariety $V\subset  E^N$  is a \emph{translate},  respectively  a \emph{torsion variety},  if it is a finite  union of translates of algebraic subgroups  of $E^N$ by  points, respectively  by torsion points.

Furthermore, an irreducible variety $V\subset  E^N$  is \emph{transverse}, respectively \emph{weak-transverse}, if  it is not contained in any proper translate, respectively in any proper torsion variety.
We also give the following definition:
 \begin{D}
\label{rank}
 The rank  of a point in $E^N$ is the minimal dimension  of an algebraic subgroup of $E^N$ containing the point.
\end{D}

 In  \cite{EsMordell}  the authors and  Checcoli  gave  a good explicit  bound for the N\'eron-Tate height of the set of points of rank one on algebraic curves of genus at least two in $E^2$ where $E$ is without CM; the sharpness of the bounds allowed explicit examples to be computed.
 
The assumptions in  \cite{EsMordell} represent the easiest setting in this context: points of rank one and $E$  without CM. In that paper we tested the possibility of producing an explicit and even implementable method for finding the rational points on some new families of algebraic curves. 
  In \cite{viaMD} and \cite{viadabasel} Viada extended the previous methods of \cite{ExpTAC} and \cite{EsMordell}, obtaining partial and less sharp results that are however too large to be implemented.

In this article we generalise and improve on all previous results covering all possible cases that our method can solve in the elliptic context. This is a new  explicit version of the Manin-Dem'janenko Theorem in the elliptic setting.  The bound obtained here  is the first implementable generalisation of \cite{EsMordell} for higher rank in the CM and non-CM cases. It can be used to find all $k$-rational points of many new curves as presented in Section \ref{example}.  The independence of our bound on $k$ and on the generators of $E(k)$ is an interesting aspect for the possible further applications.

\bigskip

Let $E$ be an elliptic curve  given in the form
\begin{equation*}\label{uno}y^2=x^3+Ax+B.\end{equation*}
Via this equation, we embed $E^N$ into $\P_2^N$ and via the Segre embedding in $\P_{3^N-1}$.  

The degree of a  curve   $\Ci\subseteq E^N$ is the degree of its image in $\P_{3^N-1}$  and $h_2(\Ci)$ is the normalised height of $\Ci$, which is defined in terms of the Chow form of the ideal of $\Ci$, as done  in  \cite{patrice}.  We denote by $\hat{h}$  the canonical N\'eron-Tate height  on $E^N$.

The following is a weaker version of Theorems \ref{MAINTCM} and \ref{MAINTnonCM}, where the sharper but cumbersome constants $C_i$'s have been replaced by the more compact $D_i$'s. For implementations  it is advisable to use the sharper version.
\begin{thm}\label{thm:insieme}
  Let $E$ be an elliptic curve and let $\Ci$ be a curve transverse  in $E^N$. Then all the points $P$ of rank at most $N-1$ on $\Ci$ have N\'eron-Tate height explicitely bounded as follows:
  \begin{enumerate}
   \item\label{caso_E^N} If $E$ has Complex Multiplication by the field $K$,
   \begin{equation*}
      \hat h(P)\leq \Duno(N,E)\cdot h_2(\Ci)(\deg\Ci)^{N-1} +\Ddue(N,E)(\deg\Ci)^N +\Dtre(N,E).
\end{equation*}
   \item\label{caso_E^N-non-CM}  If $E$ does not have Complex Multiplication,   
   \begin{equation*}
      \hat h(P)\leq \Dcinque(N)\cdot h_2(\Ci)(\deg\Ci)^{N-1} +\Dsei(N,E)(\deg\Ci)^N+\Dsette(N,E).
\end{equation*}
  \end{enumerate}
The constants are given by:
\begin{align*}
 \ctredici(N)&=2N!\left(\frac{N(N-1)^3 3^{N-1}(2N-2)!^2 (N)!}{2^{2N-5}}\right)^{N-1},\\
   \Duno(N,E)&=\ctredici(N)f^N\abs{D_K}^{N^2-\frac{3}{2}N+1}+1,\\
   \Ddue(N,E)&=\ctredici(N)f^N\abs{D_K}^{N^2-\frac{3}{2}N+1}\left(N^2\cquattro(E)+3^{N}\log 2\right),\\
   \Dtre(N,E)&=(N+1)\cquattro(E)+1,\\
   \Dcinque(N)&=4N!\left(\frac{N^2 (N-1)^2 3^N}{4^{N-3}}N! (N-1)!^4\right)^{N-1},\\
   \Dsei(N,E)&=\Dcinque(N)\left(N^2\cquattro(E)+3^{N}\log 2\right),\\
   \Dsette(N,E)&=(N+1)\cquattro(E)+1,   
 \end{align*}  
 where, in the CM case, $D_K$ is the discriminant of the field of complex multiplication and $f$ is the conductor of $\rend$. For $C(E)$ we take the one defined in Proposition~\ref{confrontoaltezze} or any other bound for the difference between the Weil and the Canonical heights on $E$.
  \end{thm}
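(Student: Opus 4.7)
The plan is to deduce Theorem \ref{thm:insieme} directly from the two main theorems \ref{MAINTCM} and \ref{MAINTnonCM}, whose constants $\Cuno,\dots,\Csette$ are sharper but much less readable. The statement here is then the routine verification that the compact expressions $\Duno,\dots,\Dsette$ majorize the corresponding sharper constants once the $C_i$'s have been spelled out. This comparison is mechanical, so the substantive content lies in proving the two main theorems themselves.

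For those, I would follow the general strategy used in \cite{EsMordell} and further developed in \cite{viaMD} and \cite{viadabasel}, in the broader framework introduced by Bombieri--Masser--Zannier on anomalous intersections. The starting observation is that, by Definition \ref{rank}, a point $P\in\Ci$ of rank at most $N-1$ is contained in some proper algebraic subgroup of $E^N$. The decisive step is to attach to $P$ a specific proper algebraic subgroup $H_P\subset E^N$ of codimension one, defined by an equation whose coefficients are small in a quantitative sense. This is done via a geometry of numbers argument applied to the lattice of $\rend$-relations satisfied by the components of $P$: Minkowski's Second Theorem delivers an equation of $H_P$ for which both $\deg H_P$ and $h(H_P)$ are explicitly controlled in terms of $\hat h(P)$. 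The CM and non-CM cases branch here: in the CM case $\rend$ is a $\Z$-module of rank two, and passing from an $\rend$-basis to a $\Z$-basis of the relation module costs the factors $\abs{D_K}$ and $f$ that appear in $\Duno$, $\Ddue$, $\Dtre$.

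Since $\Ci$ is transverse and $H_P$ is a proper subgroup, the intersection $\Ci\cap H_P$ is zero-dimensional and contains $P$. Applying the arithmetic Bézout theorem of Philippon, as used in \cite{patrice}, together with the explicit Weil--to--Néron-Tate comparison in Proposition \ref{confrontoaltezze}, produces an upper bound of the shape
\begin{equation*}
\hat h(P)\;\leq\; \kappa(N,E)\Bigl(h_2(\Ci)\deg H_P + h(H_P)\deg\Ci\Bigr) + \text{lower-order terms},
\end{equation*}
where $\kappa(N,E)$ collects the combinatorial factor coming from the Segre embedding in $\P_{3^N-1}$. Substituting the bounds for $\deg H_P$ and $h(H_P)$ obtained at the previous step into this inequality gives the estimates claimed in parts \eqref{caso_E^N} and \eqref{caso_E^N-non-CM}, after absorbing into the $D_i$'s the gain one gets by choosing $H_P$ optimally.

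The main obstacle is not the overall logic but the bookkeeping needed to keep every constant truly explicit and small enough to be usable. Two points are delicate: first, the dependence on $N$ in the arithmetic Bézout inequality, which forces the factor $(\deg\Ci)^{N-1}$ in front of $h_2(\Ci)$ and must be tracked through the Segre embedding; second, in the CM case, the precise way the discriminant $\abs{D_K}$ and the conductor $f$ enter when one converts between $\rend$- and $\Z$-relations, which dictates the exponent $N^2-\tfrac{3}{2}N+1$ appearing in $\Duno$ and $\Ddue$. It is exactly at these two steps that the previous bounds in \cite{viaMD} and \cite{viadabasel} were too crude to be implementable, and where the improvement of the present paper is concentrated.
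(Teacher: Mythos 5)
Your first step is exactly the paper's: Theorem~\ref{thm:insieme} is deduced from Theorems~\ref{MAINTCM} and \ref{MAINTnonCM} by the mechanical verification $\Duno\leq \Cuno+\Cquattro$, $\Ddue\leq\Cdue$, $\Dtre\leq\Ctre$ (and similarly in the non-CM case), so the substance is indeed in the two main theorems. However, your sketch of those theorems contains a step that would fail. You propose to take the lattice of $\rend$-relations satisfied by $P_1,\dotsc,P_N$ and extract, via Minkowski, a short relation defining a subgroup $H_P$ \emph{containing} $P$, with $\deg H_P$ ``controlled in terms of $\hat h(P)$''. This is not possible: the minimal relation satisfied by the coordinates of $P$ can have arbitrarily large coefficients compared to $\hat h(P)$ (already for $P=(P_1,nP_1)\in E^2$ with $\hat h(P_1)$ very small and $n$ huge), so neither the degree nor the height of the minimal subgroup through $P$ is bounded by any function of $\hat h(P)$ alone. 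What the paper does instead is construct a codimension-one subgroup $H$ that does \emph{not} contain $P$, and intersect $\Ci$ with the \emph{translate} $H+P$, which approximates the subgroup $B$ of dimension $N-1$ containing $P$. Concretely: Minkowski's \emph{second} theorem (Lemma~\ref{lemma.good.generators}) produces a quasi-orthogonal basis of the Mordell--Weil module, hence linear forms $L_1,\dotsc,L_{N-1}$ with $\hat h(t_1P_1+\dotsb+t_NP_N)\ll\max_j|L_j(\mathbf t)|^2\hat h(P)$; Minkowski's \emph{first} (convex body) theorem (Lemma~\ref{MinkCM}) then yields $u\in\rend^N\setminus\{0\}$ with $\norm{u}^2\lessapprox T$ and $|L_j(u)|\lessapprox \kappa T^{-1/(2(N-1))}$, and $H=\ker u$ satisfies $\deg(H+P)\lessapprox T$ and $h_2(H+P)\lessapprox \kappa^2 T^{-1/(N-1)}\hat h(P)+C(E)T$ (Proposition~\ref{boundsotto} and Theorem~\ref{AusiliareCM}). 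You have the roles of the two Minkowski theorems interchanged, and the free parameter $T$ is essential.

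The second missing idea is the self-referential structure of the final estimate. Because $h_2(H+P)$ is bounded by a multiple of $\hat h(P)$, the arithmetic B\'ezout inequality applied to $\Ci\cap(H+P)$ gives
\begin{equation*}
h_2(P)\;\leq\;\frac{\csette\,\kappa^2}{T^{1/(N-1)}}\,h_2(P)+\cotto\,T+\frac{\cnove\,\kappa^2}{T^{1/(N-1)}},\qquad \csette\propto\deg\Ci ,
\end{equation*}
and one must choose $T=\bigl(\tfrac{N}{N-1}\csette\kappa^2\bigr)^{N-1}$ so that the coefficient of $h_2(P)$ on the right becomes $\tfrac{N-1}{N}<1$, then solve for $h_2(P)$. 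This choice of $T$, not the B\'ezout constant or the Segre embedding, is what produces the exponents: $T\propto(\deg\Ci)^{N-1}$ forces the term $h_2(\Ci)\deg H\propto h_2(\Ci)(\deg\Ci)^{N-1}$ and the term $(\deg\Ci)^N$ (the paper's remark after Theorem~\ref{MAINTCM} stresses that these exponents reflect the rank of the points, not the ambient dimension). Your attribution of $(\deg\Ci)^{N-1}$ to the B\'ezout inequality through the Segre embedding is therefore incorrect, and without the absorption step the argument does not close.
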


 \bigskip
 
 The bound that we obtain for the height of the points is completely explicit and suitable for applications in the context of the Mordell Conjecture; we deduce here a straightforward corollary. To apply the theorem to the $k$-rational points of a curve $\Ci\subset E^N$, it is sufficient to assume that the $\rend$-module generated by the points in $E(k)$ has rank $\le N-1$ as an $\rend$-module. Then clearly all points of $\Ci(k)$ satisfy the hypothesis of the theorem.

This immediately gives the following:
 \begin{cor}
  Let $E$ be an elliptic curve defined over a number field $k$. Let $\Ci$ be a curve transverse  in $E^N$.
  \begin{enumerate}
   \item If $E$ has Complex Multiplication and the rank of $E(k)\otimes_\Z \rend$ as an $\rend$-module is $< N$, then any $k$-rational point $P\in\Ci(k)$ has N\'eron-Tate height bounded as
    \begin{equation*} \hat{h}(P)\leq \Duno(N,E)\cdot h_2(\Ci)(\deg\Ci)^{N-1} +\Ddue(N,E)(\deg\Ci)^N +\Dtre(N,E); \end{equation*}
    \item if $E$ does not have Complex Multiplication and the rank of $E(k)$ is $< N$, then any $k$-rational point $P\in\Ci(k)$ has N\'eron-Tate height bounded as
    \begin{equation*} \hat{h}(P)\leq \Dcinque(N)\cdot h_2(\Ci)(\deg\Ci)^{N-1} +\Dsei(N,E)(\deg\Ci)^N+\Dsette(N,E), \end{equation*}
  \end{enumerate}
where the constants are the same as in Theorem \ref{thm:insieme}.
\end{cor}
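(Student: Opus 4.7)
The plan is to deduce this corollary directly from Theorem \ref{thm:insieme}: all that is needed is to verify that under the stated hypothesis on the Mordell--Weil group, every $k$-rational point of $\Ci$ has rank at most $N-1$ in the sense of Definition \ref{rank}, so that the height bound of the theorem applies verbatim.

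Take $P = (P_1, \dots, P_N) \in \Ci(k)$, so each $P_i \in E(k)$. In the non-CM case, I would let $g_1, \dots, g_r$ be $\Z$-generators of $E(k)/E(k)_\tors$, with $r < N$ by hypothesis, and write $P_i = \sum_{j=1}^r a_{ij} g_j + T_i$ with $a_{ij} \in \Z$ and $T_i \in E(k)_\tors$. In the CM case, I would observe that the natural $\rend$-module homomorphism $E(k) \otimes_\Z \rend \to E(\bar{\Q})$ sending $P \otimes \alpha$ to $\alpha P$ surjects onto the $\rend$-submodule of $E(\bar{\Q})$ generated by $E(k)$; by hypothesis its source, and hence this submodule, has $\rend$-rank some $r < N$. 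After choosing generators $g_1, \dots, g_r$ of the latter modulo torsion, one can multiply by a common positive integer $n$ to clear torsion and denominators, arriving at $n P_i = \sum_{j=1}^r \alpha_{ij} g_j$ with $\alpha_{ij} \in \rend$.

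With this data, I then consider the group homomorphism $\phi_A : E^r \to E^N$ induced by the matrix $A = (\alpha_{ij})$, namely $(x_1, \dots, x_r) \mapsto \bigl(\sum_j \alpha_{ij} x_j\bigr)_i$. Its image $H_A = \phi_A(E^r)$ is an algebraic subgroup of $E^N$ of dimension at most $r$. By construction $nP \in H_A$, hence $P$ lies in $[n]^{-1}(H_A)$, which is again an algebraic subgroup of $E^N$ of the same dimension $\leq r \leq N-1$. Therefore $P$ has rank at most $N-1$, and Theorem \ref{thm:insieme} applies to $P$ and delivers the stated bound. The argument is essentially bookkeeping, with no serious obstacle; the only mild subtleties are the torsion adjustment and, in the CM case, the fact that $E(k)$ need not itself be an $\rend$-module, which is precisely why the hypothesis is phrased via the tensor product.
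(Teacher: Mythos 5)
Your proposal is correct and follows the same route as the paper: the paper deduces the corollary immediately from Theorem \ref{thm:insieme} by noting that the hypothesis on the Mordell--Weil group forces every point of $\Ci(k)$ to lie in an algebraic subgroup of dimension $\le N-1$, hence to have rank at most $N-1$ in the sense of Definition \ref{rank}. You simply spell out in more detail (the torsion-clearing multiple $n$ and the subgroup $[n]^{-1}(H_A)$) what the paper leaves as an evident remark.
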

 
  In the case that $k$ contains the field of complex multiplication $K$, the set $E(k)$ is itself an $\rend$-module (see e.g. \cite{SilvermanAdvancedTopics}, Theorem 2.2) and its rank as an $\rend$-module is equal to half its rank as an abelian group; we get then the following corollary:
  \begin{cor}
  Let $E$ be an elliptic curve  with  CM  defined over a number field $k$ which contains the field of Complex Multiplication. Assume that $\rank_\Z E(k)< 2 N$. Let $\Ci$ be a curve transverse  in $E^N$. Then any $k$-rational point $P\in\Ci(k)$ has N\'eron-Tate height bounded as
    \begin{equation*} \hat{h}(P)\leq \Duno(N,E)\cdot h_2(\Ci)(\deg\Ci)^{N-1} +\Ddue(N,E)(\deg\Ci)^N +\Dtre(N,E), \end{equation*} 
    with the same constants as in Theorem~\ref{thm:insieme}.
 \end{cor}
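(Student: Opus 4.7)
The plan is to reduce the statement to Theorem~\ref{thm:insieme}(\ref{caso_E^N}) by translating the hypothesis on the $\Z$-rank of $E(k)$ into the geometric rank condition of that theorem.

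First I would invoke the cited result of Silverman (\cite{SilvermanAdvancedTopics}, Theorem 2.2): because $k$ contains the CM field $K$, every endomorphism of $E$ is defined over $k$, so $\rend$ acts on $E(k)$ and endows it with the structure of a finitely generated $\rend$-module. Since $\rend$ is an order in the imaginary quadratic field $K$, it is free of rank $2$ as a $\Z$-module. Tensoring $E(k)$ with $\Q$ therefore yields a finite-dimensional $K$-vector space; comparing its $\Q$- and $K$-dimensions gives
\[
  \rank_\rend E(k) \;=\; \tfrac{1}{2}\rank_\Z E(k) \;<\; N.
\]

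The second step is to turn this algebraic inequality into a geometric one. Given $P = (P_1,\ldots,P_N) \in \Ci(k) \subset E(k)^N$, the above strict inequality forces a nontrivial $\rend$-linear relation $\phi_1 P_1 + \cdots + \phi_N P_N = 0$ with $\phi_i \in \rend$ not all zero. Such a relation cuts out a proper algebraic subgroup of $E^N$ of dimension $N-1$ containing $P$, so $P$ has rank at most $N-1$ in the sense of Definition~\ref{rank}. Applying Theorem~\ref{thm:insieme}(\ref{caso_E^N}) with the constants $\Duno$, $\Ddue$, $\Dtre$ then yields the claimed bound. No substantial obstacle is expected: the corollary amounts simply to the dictionary between the Mordell--Weil $\Z$-rank and the geometric rank of points on $\Ci$ in the presence of complex multiplication, with the factor $2$ coming from $[K:\Q]=2$.
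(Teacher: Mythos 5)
Your proposal is correct and follows essentially the same route as the paper's own (very brief) argument: since $k$ contains $K$, $E(k)$ is an $\rend$-module whose $\rend$-rank is half its $\Z$-rank, hence $<N$, so every point of $\Ci(k)\subset E(k)^N$ has rank at most $N-1$ in the sense of Definition~\ref{rank} and Theorem~\ref{thm:insieme}(\ref{caso_E^N}) applies. The only cosmetic point is that the $K$-linear dependence of $P_1,\dotsc,P_N$ a priori gives $\phi_1P_1+\dotsb+\phi_NP_N$ equal to a torsion point rather than $O$; multiplying the $\phi_i$ by a suitable nonzero integer yields an exact relation, so $P$ indeed lies in a codimension-one algebraic subgroup and your conclusion stands.
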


 In both corollaries one can obtain sharper constants using the $C_i$'s from Theorems~\ref{MAINTCM} and \ref{MAINTnonCM} instead of the $D_i$'s of Theorem~\ref{thm:insieme}.  
 
\bigskip

{\it Sketch of the proof of Theorem \ref{thm:insieme}}: The proof follows a classical pattern in diophantine approximation.  By definition, a point $P$ of rank at most $N-1$ lies in an algebraic subgroup $B$ of dimension $N-1$. We construct an auxiliary algebraic subgroup $H$ such that the translate $H+P$ approximates $B$ and has height and degree bounded in terms of $h(P)$ and some parameters. We then use the arithmetic B\'ezout Theorem on the intersection of the curve and the auxiliary translate. A good choice of the parameters will produce an upper bound for the height of our starting point. All the inequalities coming into play must be made explicit, moreover the constants must be kept as small as possible for the applications. Central to this purpose is the use of the first Minkowski theorem instead of other easier but less sharp approximations.
This proof generalizes  the proof in \cite{EsMordell} from rank one to higher rank and the proof of \cite{viadabasel}  from $\mathbb{Z}$-lattices to $\mathcal{O}$-lattices, with $\mathcal{O}$ an order in the ring of integers of an imaginary quadratic number field.

This completes all cases of the explicit Mordell Conjecture that can be covered with our method and opens a wide range of examples of curves suitable for determining the $k$-rational points.

For example let us consider the families of curves
 \begin{align*}
  \Ci_n&=\{(x_1,y_1)\times(x_2,y_2)\in E^2\mid x_1^n=y_2\} \text{ and}\\
  \Di_n&=\{(x_1,y_1)\times(x_2,y_2)\in E^2\mid \Phi_n(x_1)=y_2\},
\end{align*}
where $\Phi_n$ is the $n$-th cyclotomic polynomial and $E$ is the elliptic curve defined by the equation $y^2=x^3+2$; then their sets of $\Q(\sqrt{-3})$-rational points is described in the following theorem:

  \begin{thm}\label{teorema:esempio}
     Let $E,\Ci_n,\Di_n$ be as above. Let $g=(-1:1:1)$ and $O=(0:1:0)$ in $E(\Q)$. Let $\zeta\in\rend$ be a primitive cube root of $1$. Then the sets $\Ci_n(\Q(\sqrt{-3}))$ and $\Di_n(\Q(\sqrt{-3}))$ are described as follows:
     \begin{align*}
	&\Ci_n(\Q(\sqrt{-3}))\setminus\{(O,O)\} = & & \\
	&=\{(a g, b g)\mid a=\pm 1, \pm \zeta, \pm \zeta^2\text{ and }b=1,\zeta,\zeta^2\}	&\text{if }n&\equiv 0\pmod 6\\
	&=\{(a g, b g)\mid a=\pm 1\text{ and }b=-1,-\zeta,-\zeta^2\}	&\text{if }n&\equiv \pm 1\pmod 6\\
	&=\{(a g, b g)\mid a=\pm 1\text{ and }b=1,\zeta,\zeta^2\}	&\text{if }n&\equiv \pm 2\pmod 6\\
	&=\{(a g, b g)\mid a=\pm 1, \pm \zeta, \pm \zeta^2\text{ and }b=-1,-\zeta,-\zeta^2\}	&\text{if }n&\equiv 3\pmod 6,
     \end{align*}
     and
     \begin{align*}
	&\Di_n(\Q(\sqrt{-3}))=   & &  \\
	&=\{(O,O)\}	&\text{ if }n=1,2,\text{ or }n=2p^k\text{ for $k\geq 1$ and $p$ a prime number} &  \\
	&=\{(\pm g,  g)\}\cup\{(O,O)\}	&\text{otherwise.} &  
     \end{align*}
     
  \end{thm}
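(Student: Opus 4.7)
The plan is to apply the Complex Multiplication corollary of Theorem~\ref{thm:insieme} to both families, extract an explicit N\'eron--Tate height bound on the $\Q(\sqrt{-3})$-rational points, and then enumerate the finitely many candidates of bounded height. The elliptic curve $E:y^2=x^3+2$ has $j$-invariant zero, so $\rend=\Z[\zeta]\subset K=\Q(\sqrt{-3})$. A standard descent computation shows that $E(\Q(\sqrt{-3}))$ is torsion-free of $\Z$-rank $2$ and is freely generated as an $\rend$-module by $g=(-1,1)$. In particular $\rank_\Z E(\Q(\sqrt{-3}))=2<2N=4$ for $N=2$, so the hypothesis of the CM corollary is satisfied provided the curve at hand is transverse in $E^2$.

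Transversality of $\Ci_n$ and $\Di_n$ follows from the shape of their defining equations: a relation $x_1^n=y_2$ or $\Phi_n(x_1)=y_2$ cannot come from a group-theoretic identity of the form $\alpha_1 P_1+\alpha_2 P_2=\mathrm{const}$ with $\alpha_i\in\rend$, because the latter is cut out by polynomials involving both $x$- and $y$-coordinates in a symmetric way. In the bi-projective embedding $E\times E\hookrightarrow\PP^2\times\PP^2$, the equation $X_1^n Z_2=Y_2 Z_1^n$ and the homogenisation of $\Phi_n(x_1)=y_2$ have bidegrees linear in $n$, so $\deg\Ci_n,\deg\Di_n$ grow linearly in $n$ and $h_2(\Ci_n),h_2(\Di_n)$ are $O(\log n)$ by an elementary Chow-form computation (together with a standard bound on the logarithmic heights of cyclotomic coefficients). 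Plugging these into Theorem~\ref{thm:insieme}(\ref{caso_E^N}) --- or, for sharpness, into the underlying Theorem~\ref{MAINTCM} --- yields an explicit polynomial bound $\hat h(P)\le H(n)$ for every $P\in\Ci_n(\Q(\sqrt{-3}))$ and analogously for $\Di_n$.

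Every $\Q(\sqrt{-3})$-rational point of $E^2$ has the form $P=(ag,bg)$ with $a,b\in\Z[\zeta]$, and the compatibility of the N\'eron--Tate height with CM endomorphisms gives $\hat h(P)=(|a|^2+|b|^2)\hat h(g)$, where $|\cdot|$ is the complex absolute value. Combined with an explicit lower bound for $\hat h(g)$ derived from Proposition~\ref{confrontoaltezze}, the inequality $\hat h(P)\le H(n)$ restricts $(a,b)$ to an explicit finite subset of $\Z[\zeta]^2$. Since $[\zeta]$ acts on $E$ by $(x,y)\mapsto(\zeta x,y)$, the $x$- and $y$-coordinates of such $ag$ lie in $\{\pm 1,\pm\zeta,\pm\zeta^2\}$ and $\{\pm 1\}$ respectively. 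The defining equation $x_1^n=y_2$ then reduces, via $x_1^6=1$, to a congruence condition modulo $6$ together with the sign compatibility $(-1)^n=\mathrm{sgn}(y_2)$; a case analysis in the six residue classes produces precisely the list for $\Ci_n$. Substituting into $\Phi_n(x_1)=y_2$ and using the classical values of $\Phi_n$ at roots of unity yields the list for $\Di_n$.

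The main obstacle is calibrating the constants so that the enumeration above contains only the intended candidates and no parasitic ones with $|a|^2+|b|^2$ slightly larger. This requires using the sharper $C_i$-version of the height bound rather than the coarser $D_i$-version of Theorem~\ref{thm:insieme}, paired with a numerically accurate lower bound for $\hat h(g)$; only then does the bound on $|a|^2+|b|^2$ shrink to a list verifiable by direct substitution, certifying that no further rational points exist on $\Ci_n$ or $\Di_n$.
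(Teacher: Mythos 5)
Your overall framework---apply the CM version of the main theorem to get an explicit upper bound $\hat h(P)\le H(n)$ polynomial in $n$, write $P=([a]g,[b]g)$ with $a,b\in\Z[\zeta]$, and use $\hat h(P)=(|a|^2+|b|^2)\hat h(g)$ to confine $(a,b)$ to a finite set---matches the first half of the paper's argument. But there is a genuine gap in how you finish. You assert that for the surviving candidates ``the $x$- and $y$-coordinates of such $ag$ lie in $\{\pm 1,\pm\zeta,\pm\zeta^2\}$ and $\{\pm 1\}$ respectively,'' and then run a congruence-mod-$6$ analysis. That description is only true when $a$ is a \emph{unit} of $\Z[\zeta]$; for any non-unit $a$ the point $[a]g$ has coordinates of large height (e.g.\ $[2]g=(17/4,-71/8)$), not roots of unity. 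The height bound from the main theorem is of size roughly $6\cdot 10^5\,(2n+3)^2$, so the finite set of admissible $(a,b)$ contains millions of non-unit pairs even for $n=1$, and your argument gives no mechanism whatsoever to exclude them. Sharpening the constants from the $D_i$'s to the $C_i$'s, as you propose in your last paragraph, does not come close to shrinking the list to units only; the gap between the theorem's upper bound and $\hat h(g)\approx 1.13$ is structural, not a matter of calibration.

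The paper closes this gap with two additional ingredients you are missing. First, a Stoll-type \emph{lower} bound (Theorem~\ref{T:lowerbound}) shows that any non-$S$-integral point on $\Ci_n$ has $\hat h(P)\ge 4^{2n/3+1}$, which grows exponentially and therefore exceeds the polynomial upper bound once $n>20$; for such $n$ one is reduced to $\Ci_n(\Oo_K)$, and the separately computed fact that $E(\Oo_K)=\{\pm g,\pm\zeta g,\pm\zeta^2 g\}$ yields exactly the unit multiples your case analysis then handles correctly. Second, for the finitely many remaining $n\le 20$ the paper runs a sieve (Algorithm~\ref{Algo}) reducing modulo many primes $\ell\equiv 1\pmod 3$ to eliminate every non-unit pair $(a,b)$ inside the height bound. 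Without either the integrality reduction or the modular sieve, your enumeration step is not actually carried out, and the claimed lists for $\Ci_n(\Q(\sqrt{-3}))$ and $\Di_n(\Q(\sqrt{-3}))$ are not established.
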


\section{Preliminaries}\label{SezioneAltezze}

 In this section we introduce the  notations and we recall from \cite{EsMordell} several explicit inequalities between different height functions. 
  We also recall  some basic results in Arithmetic Geometry that play an important role in our proofs: the a\-de\-lic Minkowski Theorem,  the Arithmetic B\'ezout  Theorem and the Zhang  Inequality.\\

In this paper, the word {\it rank} is used with his several different meanings. For clarity, we remember that the rank of a finitely generated abelian group is the number of generators over $\Z$ of its free part; the rank of an $R$-module $M$ for $R$ an integral domain with field of fraction ${\rm frac}{(R)}$ is the dimension of the vector space $M\otimes_R {\rm frac}(R)$; the $k$-rank of an abelian variety $A$ defined over $k$, for $k$ a number field, is the rank of $A(k)$ as an abelian group; the rank of a point on the abelian variety $A=E^N$, for $E$ an elliptic curve was introduced in Definition~\ref{rank}.\\

Let $E$  be an elliptic curve defined over a number field $k$  by a fixed Weierstrass equation
\begin{equation}\label{Weq}
E: y^2=x^3+Ax+B
\end{equation}
with $A$ and $B$  in the ring of  integers  of $k$ (this assumption is not restrictive).  We denote the  discriminant of $E$ by
\[
 \Delta=-16(4A^3+27B^2)\] and the $j$-invariant  by \[j=\frac{-1728(4A)^3}{\Delta}.
\]
 We consider  $E^N$ embedded in $\P_{3^N-1}$ via the following composition map 
\begin{equation}\label{embe}
  E^N \hookrightarrow \P_2^N \hookrightarrow \P_{3^N-1}
\end{equation}
where the first map sends a point $(X_1,\ldots,X_N)$ to $((x_1,y_1),\dotsc,(x_N,y_N))$ (the $(x_i,y_i)$ being the affine coordinates of $X_i$ in the Weierstrass form of $E$) and the second map is the Segre embedding. 
Degrees and  heights are computed with respect to this fixed embedding.

\subsection{Heights of points}

If $P=(P_0:\ldots:P_n)\in \mathbb{P}_n(\overline{\Q})$ is a point in the projective space, then  the absolute logarithmic Weil height of $P$ is defined as
\[h_W(P)=\sum_{v\in \mathcal{M}_K} \frac{[K_v:\Q_v]}{[K:\Q]}\log \max_i \{\abs{P_i}_v\}\]
where $K$ is a field of definition for $P$ and $\mathcal{M}_K$ is its set of places. If $\alpha\in\overline\Q$ then the Weil height of $\alpha$ is defined as $h_W(\alpha)=h_W(1:\alpha)$.

We also define another height which differs  from the Weil height  at the archimedean places:
\begin{equation}\label{Defh2}
h_2(P)=\sum_{v\text{ finite}}\frac{[K_v:\Q_v]}{[K:\Q]}\log \max_i \{\abs{P_i}_v\} +\sum_{v\text{ infinite}}\frac{[K_v:\Q_v]}{[K:\Q]}\log \left(\sum_i \abs{P_i}_v^2\right)^{1/2}.\end{equation}

 For a point $P\in E$ we denote by $\hat h(P)$ its N\'eron-Tate height as defined in \cite{patriceI}  (which is one third of the usual N\'eron-Tate height used also in   \cite{ExpTAC}). 

If $P=(P_1,\dotsc,P_N)\in E^N$,  
 then for $h$ equal to $h_{W},h_2$ and $\hat{h}$ we define
\begin{equation*}
   h(P)=\sum_{i=1}^N h(P_i).
\end{equation*}

The following proposition directly  follows from \cite[Theorem 1.1]{SilvermanDifferenceHeights} and \cite[Proposition 3.2]{EsMordell}.
\begin{propo}\label{confrontoaltezze}
 


For $P\in E^N$, 
 \begin{equation*}
    \abs{h_2(P)-\hat h (P)}\leq N C(E),
\end{equation*}
where 
$$C(E)=\frac{h_W(\Delta)+3h_W(j)}{4}+\frac{h_W(A)+h_W(B)}{2}+4.$$

\end{propo}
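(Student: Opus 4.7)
Both $h_2$ and $\hat h$ are defined on $E^N$ as sums over the $N$ coordinate factors, so the triangle inequality gives
\[
|h_2(P)-\hat h(P)|\le\sum_{i=1}^N|h_2(P_i)-\hat h(P_i)|,
\]
and it suffices to prove the one-factor estimate $|h_2(Q)-\hat h(Q)|\le C(E)$ for every $Q\in E$; the general case then follows by summing over the coordinates.

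For a single $Q\in E\subset\P_2$ I would introduce the usual Weil height $h_W(Q)=h_W(1:x(Q):y(Q))$ of the projective point and split the estimate via the triangle inequality as
\[
|h_2(Q)-\hat h(Q)|\le |h_2(Q)-h_W(Q)|+|h_W(Q)-\hat h(Q)|.
\]
The first term involves only the archimedean places, where the $\ell^2$- and the sup-norms on the three projective coordinates differ by at most $\tfrac12\log 3$; in the precise form needed, this is Proposition~3.2 of \cite{EsMordell}. The second term is a case of Silverman's Theorem~1.1 from \cite{SilvermanDifferenceHeights}, which bounds the difference between the Weil and the canonical heights on the elliptic curve $E$ explicitly in terms of $h_W(\Delta)$, $h_W(j)$, $h_W(A)$, $h_W(B)$ and an absolute numerical constant.

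Combining the two estimates, rearranging the coefficients and summing over the $N$ coordinate factors of $P$ produces the bound $N\cdot C(E)$ stated in the proposition. There is no conceptual difficulty here: both ingredients are off-the-shelf explicit inequalities. The only real work is bookkeeping --- one has to reconcile Silverman's normalisation with the paper's (in which $\hat h$ is one third of the usual N\'eron--Tate height, as recalled in the preliminary section), carry through the $\ell^2$/$\ell^\infty$ discrepancy at the archimedean places, and absorb all the remaining absolute constants into the single ``$+4$'' appearing in $C(E)$ --- so that the combined inequality collapses into exactly the stated form $\tfrac14(h_W(\Delta)+3h_W(j))+\tfrac12(h_W(A)+h_W(B))+4$.
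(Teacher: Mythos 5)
Your proposal is correct and follows essentially the same route as the paper, which gives no detailed argument but states that the proposition ``directly follows'' from Theorem~1.1 of \cite{SilvermanDifferenceHeights} together with Proposition~3.2 of \cite{EsMordell} --- precisely the two ingredients (Silverman's explicit Weil-versus-canonical comparison, and the $h_2$ versus $h_W$ archimedean adjustment) that you combine coordinate-wise. The only work left implicit in both your write-up and the paper is the bookkeeping of normalisations and absolute constants that yields the stated form of $C(E)$.
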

Further details on the relations between the different height functions defined above can be found in \cite[Section 3]{EsMordell}.

\subsection{Heights of varieties} 
For  a subvariety $V\subseteq \P_m$  we denote by $h_2(V)$ the normalised height of $V$ defined  in terms of the Chow form of the ideal of $V$, as done  in  \cite{patrice}. This height extends the height $h_2$ defined for points by formula~\eqref{Defh2} (see~\cite{BGSGreen} equation (3.1.6)).
We also consider  the canonical height $h(V)$, as defined in \cite{patriceI}; when the variety $V$ reduces to a point $P$, then $h(P)=\hat h(P)$  (see~\cite{patriceI}, Proposition 9).

\subsection{The degree of varieties}

The degree of  an irreducible variety $V\subset  \P_m$ is the maximal cardinality of a finite intersection $V\cap L$, with $L$ a linear subspace of dimension equal to the codimension of $V$. The degree is often conveniently computed as an intersection product.

If $X(E,N)$ is the image of $E^N$ in $\mathbb{P}_{3^N-1}$ via the above map, then by \cite{ExpTAC}, Lemma 2.1 we have \begin{equation}\label{degSegre}\deg X(E,N)=3^N N!.\end{equation}

\subsection{ The Arithmetic B\'ezout Theorem}
 The following explicit result is proven by Philippon in  \cite{patrice}, Th\'eor\`eme 3. It describes  the behaviour of the height under intersections. \begin{thm}[Arithmetic B\'ezout theorem]\label{AriBez}
 Let $X$ and $Y$ be irreducible closed subvarieties of $\P_m$ defined over  the algebraic numbers. If $Z_1,\dotsc,Z_g$ are the irreducible components of $X\cap Y$, then
 \[
  \sum_{i=1}^g h_2(Z_i)\leq\deg(X)h_2(Y)+\deg(Y)h_2(X)+\csei(\dim X,\dim Y, m)\deg(X)\deg(Y)
 \]
where
\begin{equation}\label{costaBez}
 \csei(d_1,d_2,m)=\left(\sum_{i=0}^{d_1}\sum_{j=0}^{d_2} \frac{1}{2(i+j+1)}\right)+\left(m-\frac{d_1+d_2}{2}\right)\log2.
\end{equation}
\end{thm}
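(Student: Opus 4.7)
This is a deep classical result of Philippon; any serious independent proof would follow his original approach via Chow forms and elimination theory. To each irreducible subvariety $V\subseteq\P_m$ of dimension $d$ one attaches its Chow (Cayley) form $F_V$, a polynomial of multidegree $(\deg V,\ldots,\deg V)$ in $d+1$ blocks of $m+1$ variables, whose vanishing on $(u^{(0)},\ldots,u^{(d)})$ encodes that the corresponding hyperplanes meet $V$. By definition, $h_2(V)$ is the Mahler measure of $F_V$ computed with the $\ell^2$-norm at the archimedean places, and the whole argument is driven by the fact that intersections of varieties translate into multiplicative relations between Chow forms, counted with intersection multiplicities.

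The proof itself would proceed by induction on the dimensions, starting from the case where $Y$ is a hyperplane $H_u$: the Chow form of $X\cap H_u$ is obtained by specialising the last block of variables of $F_X$ at $u$, and a standard estimate for Mahler measures under such specialisation yields $h_2(X\cap H_u)\le h_2(X)+\deg(X)h_2(H_u)+c(d_1,m)\deg(X)$, the error $c(d_1,m)$ coming from integrating $\log\|\cdot\|_2$ over the unit sphere in $\C^{m+1}$ against the Fubini--Study measure. Iterating for a system of hyperplanes cutting out $Y$ in general position and summing the geometric errors produces exactly the harmonic-type sum $\sum_{i,j}\tfrac{1}{2(i+j+1)}$ of~\eqref{costaBez}; the additional term $(m-\tfrac{d_1+d_2}{2})\log 2$ then arises in the passage between the global $\ell^2$-norm on $\C^{m+1}$ and the partial $\ell^2$-norms attached to the individual blocks of variables of each Chow form, via $\|\cdot\|_2\le\sqrt{k}\|\cdot\|_\infty$. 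The extension to an arbitrary $Y$ rests on Philippon's product formula, which expresses a power of $F_{X\cap Y}$ as a resultant-type form built from $F_X$ and $F_Y$; factoring this out into the Chow forms of the individual components $Z_i$ (with their intersection multiplicities) is what puts the sum $\sum_i h_2(Z_i)$ on the left-hand side.

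The main obstacle is the analytic bookkeeping required to extract the sharp constant~\eqref{costaBez}: one has to compute precisely the Mahler measures of Cayley forms of the Grassmannian of $d$-planes in $\P_m$ and to make careful comparisons between norms on several different wedge powers, which is where both the harmonic factors and the $\log 2$ correction are finally pinned down. Since the full argument is carried out in \cite{patrice}, Th\'eor\`eme~3, I would invoke it in the stated form and use the constant~\eqref{costaBez} as a black box, without reproducing the detailed analytic computations here.
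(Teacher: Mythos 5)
Your proposal is correct and takes the same approach as the paper: the paper does not prove this theorem but simply quotes it as Th\'eor\`eme~3 of \cite{patrice}, exactly as you ultimately do. Your sketch of Philippon's Chow-form argument is a reasonable gloss, but since both you and the paper invoke the result as a black box, there is nothing further to check.
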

We note here for simplicity that the constant $\csei(1,N-1,3^N-1)$, which will occur often in our calculations, is bounded above by $3^N\log 2$.

\subsection{The Zhang Inequality}
 In order to state  Zhang's inequality,  we define the essential minimum $ \mu_2(X)$ of an irreducible algebraic subvariety $X\subset \P_m$ as 

\[
\mu_2(X)=\inf\{\theta\in\R\mid\text{the set }\{P\in X\mid  h_2(P)\leq\theta\}\text{ is Zariski dense in }X\}.
\] 

Zhang's inequality relates the essential minimum of a variety with its height and degree. See \cite{Zhang95}, Theorem~5.2 for the original argument and \cite{tiruchirapalli}, Th{\'e}or{\`e}me~3.1 for an account which uses the height functions defined by Philippon.
\begin{thm}[Zhang inequality]\label{Zhang}
Let $X\subset \P_m$ be an irreducible algebraic subvariety. Then 
\begin{equation}\label{zhangh2}
\mu_2(X)\leq\frac{h_2(X)}{\deg X}\leq(1+\dim X)\mu_2(X).
\end{equation}
\end{thm}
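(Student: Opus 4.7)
The plan is to prove the two halves of \eqref{zhangh2} by somewhat different arguments, following Zhang's classical strategy.

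\textbf{The lower bound $\mu_2(X)\leq h_2(X)/\deg X$.} I would realise $h_2(X)/\deg X$ as an average of heights of a Zariski dense family of points of $X$. Concretely, intersect $X$ successively with $d=\dim X$ hyperplanes $H_1,\dots,H_d$ of very small height (for instance linear forms with bounded integer coefficients defined over $\Q$), chosen in sufficiently general position that each intersection strictly drops the dimension. Applying arithmetic B\'ezout (Theorem~\ref{AriBez}) iteratively yields a zero-cycle $Z=X\cap H_1\cap\cdots\cap H_d$ of degree $\deg X$ satisfying
\begin{equation*}
h_2(Z)\;\leq\;h_2(X)+\deg(X)\sum_{i=1}^{d} h_2(H_i)+C\deg(X),
\end{equation*}
where $C$ aggregates the $\csei$ terms of Theorem~\ref{AriBez}. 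Dividing by $\deg Z=\deg X$ and letting $h_2(H_i)\to 0$ shows that the average height of $Z$ is at most $h_2(X)/\deg X+\epsilon$, so $Z$ contains a point of such height. Varying the $H_i$ inside an algebraic family of uniformly small-height hyperplanes sweeps out a Zariski dense subset of $X$ consisting of such points, giving the inequality.

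\textbf{The upper bound $h_2(X)/\deg X\leq (1+\dim X)\mu_2(X)$.} I would argue by induction on $d=\dim X$. For $d=0$, $X$ is a Galois orbit of a single closed point $P$: Galois invariance of $h_2$ gives $\mu_2(X)=h_2(P)$ and $h_2(X)=\deg(X)\,h_2(P)$, so equality holds. For the inductive step, fix $\theta>\mu_2(X)$, so that $\{P\in X:h_2(P)\leq\theta\}$ is Zariski dense. Pick a point $P$ in this set and a hyperplane $H$ through $P$ whose height differs from $h_2(P)$ by at most a bounded additive constant. The components $Y_1,\dots,Y_g$ of $X\cap H$ have dimension $d-1$, and by moving $H$ in a family through $P$ and invoking a Baire-category argument one ensures $\mu_2(Y_j)\leq\theta$ for every $j$. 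The inductive hypothesis then gives $h_2(Y_j)\leq d\,(\deg Y_j)\,\theta$, and the classical identity $\sum_j\deg Y_j=\deg X$ yields $\sum_j h_2(Y_j)\leq d\,(\deg X)\,\theta$. A matching inequality relating $h_2(X)$ to $\sum_j h_2(Y_j)+\deg(X)\,h_2(H)$, coming from arithmetic intersection theory on the ambient projective space, converts this into $h_2(X)\leq(d+1)\,(\deg X)\,\theta$; letting $\theta\downarrow\mu_2(X)$ completes the induction.

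\textbf{Main obstacle.} The technical heart of the proof is the upper bound. Arithmetic B\'ezout (Theorem~\ref{AriBez}) delivers an upper bound on $\sum_j h_2(Y_j)$ in terms of $h_2(X)$, which is the \emph{wrong} direction for the induction; one therefore needs an independent positivity ingredient (arithmetic Hilbert--Samuel, an arithmetic Nakai--Moishezon-type criterion, or a direct analysis of arithmetic intersection numbers on a model of $X$) to produce a lower bound for $\sum_j h_2(Y_j)+\deg(X)\,h_2(H)$ in terms of $h_2(X)$. Coupling this positivity statement with the genericity requirement on $H$, which ensures every component $Y_j$ of $X\cap H$ inherits the ``dense set of small points'' property from $X$, is the delicate part of Zhang's original argument.
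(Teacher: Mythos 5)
First, a point of reference: the paper does not prove Theorem~\ref{Zhang} at all; it is quoted from the literature, with pointers to \cite{Zhang95} and to \cite{tiruchirapalli} for the version adapted to Philippon's height $h_2$. So your proposal has to be judged against those sources rather than against an argument in the paper.

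Your sketch of the first inequality $\mu_2(X)\le h_2(X)/\deg X$ follows the standard line, but as written it does not close: the additive term $\csei\deg(X)\deg(H_i)$ in Theorem~\ref{AriBez} does not tend to zero as $h_2(H_i)\to 0$, so after dividing by $\deg X$ you only obtain $\mu_2(X)\le h_2(X)/\deg X+C$ for an absolute constant $C$ depending on $m$. To get the sharp statement one must replace the general arithmetic B\'ezout bound by the exact induction formula for the height of a hyperplane section, $h_2(X\cdot\mathrm{div}(\ell))=h_2(X)+\deg(X)h_2(\ell)-\int_{X(\C)}\log\bigl(\abs{\ell}/\norm{\cdot}\bigr)\,c_1^{\dim X}$, in which the archimedean error is controlled by the sup-norm of the linear form and can genuinely be made small.

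The real gap is in the second inequality, which is the substantive content of Zhang's theorem, and your induction does not establish it. Two steps fail. (i) The ``matching inequality'' bounding $h_2(X)$ from above by $\sum_j h_2(Y_j)+\deg(X)h_2(H)+O(\deg X)$ amounts to a lower bound on $\int_{X(\C)}\log\norm{s_H}$, which can be arbitrarily negative when $X(\C)$ clusters near $H(\C)$; controlling it is exactly where Zhang invokes arithmetic Hilbert--Samuel / arithmetic ampleness. You correctly flag this, but deferring it means the inequality is not proved: that positivity statement is not an auxiliary lemma, it \emph{is} the theorem. (ii) The inheritance claim $\mu_2(Y_j)\le\theta$ for every component $Y_j$ of $X\cap H$ is unjustified: a Zariski-dense set of points of height $\le\theta$ on $X$ need not meet a fixed divisor at all, let alone densely in each component, and there is no Baire-category structure on the family of small-height hyperplanes through $P$ that forces this. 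This failure of the essential minimum to descend to hyperplane sections is precisely why the naive induction breaks down and why Zhang argues instead by perturbing the metric and producing sections of small norm. For the purposes of this paper the correct course is the one the authors take: cite \cite{Zhang95} and \cite{tiruchirapalli}.
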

We also define a different essential minimum for subvarieties of $E^N$, relative to the height function $\hat h$:
\[
\hat\mu(X)=\inf\{\theta\in\R\mid\text{the set }\{P\in X\mid \hat h(P)\leq\theta\}\text{ is Zariski dense in }X\}.
\]
Using the definitions and a simple limit argument, one sees that Zhang's inequality holds also with $\hat{\mu}$, namely 
\begin{equation}\label{Zhangh^}\hat\mu(X)\leq\frac{h(X)}{\deg X}\leq(1+\dim X)\hat\mu(X).\end{equation}

If $X$ is an irreducible subvariety in $E^N$, using Proposition \ref{confrontoaltezze} we  have
\begin{equation}\label{mu2mu^}
   \abs{\mu_2(X)-\hat\mu(X)}\leq  N\ccinque(E)
\end{equation}
where the  constant $\cquattro(E)$ is defined in Proposition \ref{confrontoaltezze}.


\subsection{Complex Multiplication}\label{CM}

We denote by $\rend$ the ring of endomorphisms of $E$. We recall that an elliptic curve $E$ is said to have CM if $\mathrm{End}(E)$ is isomorphic to an order in the ring of integers $\mathcal{O}_K$ of an imaginary quadratic field $K$.

 In this case we write $K=\Q(\sqrt{D})$, for some squarefree negative integer $D$ and we set $\theta=\sqrt{D}$ if $D\not\equiv 1 \pmod 4$ and $\frac{1+\sqrt{D}}{2}$ if $D\equiv 1 \pmod 4$, so that $\mathcal{O}_K=\Z[\theta]$.

We denote by $D_K$ the discriminant of the field $K$. According to the residue class of $D$ modulo 4 we also have that $D_K=D$  if $D\equiv 1 \pmod 4$ and $D_K=4D$ if $D\not\equiv 1 \pmod 4$.

$\mathrm{End}(E)$ is then isomorphic to an order in $\mathcal{O}_K$, which we can write as $\mathbb{Z}+f\mathcal{O}_K$ for a positive integer $f$ called the conductor of the order. We set $\tau=f\theta$, so that $\rend=\Z[\tau]$.

Considering $\mathrm{End}(E)$ as a lattice in $\C$ we can compute the volume of a fundamental parallelogram with respect to the Lebesgue measure, which is then given by $1\cdot \Im \tau= f\Im \theta=\frac{f}{2}\abs{D_K}^\frac{1}{2}$.

\subsection{Algebraic Subgroups}\label{prelim-S} 
The  algebraic subgroups of $E^N$ are well known objects.

Let us identify $E(\C)$ as complex Lie group with the quotient $\C  / {\Lambda_0}$ for a lattice $\Lambda_0 \subset  \C$; then $\C^N$ is identified with the Lie algebra of $E^N(\C )$.
As explained e.g. in \cite[8.9.8]{BG06}, the set of abelian subvarieties of $E^N$ is in natural bijection with the set of complex vector subspaces $W \subset  \C ^N$ for which $W \cap {\Lambda_0}^N$ is a lattice of full rank in $W$; the bijection sends each abelian subvariety to its Lie algebra.

Using this description we can define the \emph{orthogonal complement} of an abelian subvariety $B \subset  E^N$ with Lie algebra $W_B \subset  \C ^N$ as the abelian subvariety  $B^\perp$ with Lie algebra $W^\perp_B$, where $W^\perp_B$ denotes the orthogonal complement of $W_B$ with respect to the canonical hermitian structure of $\C ^N$. Note that $W_B^\perp \cap {\Lambda_0}^N$ is a lattice of full rank in $W_B^\perp$ and its volume can be estimated using the Siegel Lemma over number fields of Bombieri and Vaaler \cite{BomVal}.

Using a different point of view we can think abelian subvarieties as matrices. The Lie algebra of an abelian subvariety $B$ of codimension $r$ is the kernel of a linear map $\varphi_\sotto:\C^N \to \C^r$. We identify $\varphi_\sotto$ with the induced  morphism $\varphi_\sotto:E^N \to E^{r}$. Then  $\ker \varphi_\sotto=\sotto+\tau$ with $\tau$ a torsion set of  cardinality  $T_0$ which is absolutely bounded (\cite{Masserwustholz}).

In turn $\varphi_\sotto$ is identified with a matrix in $\mathrm{Mat}_{r\times N}(\mathrm{End}(E))$ of rank $r$  and, using the geometry of numbers, we can choose the matrix   representing $ \varphi_\sotto$  in such a way that the degree of   $\sotto$ is essentially the product of  the squares of the norms  of the rows of the matrix.

\subsection{Minkowski's theorem for \texorpdfstring{$K$}{K}-lattices}\label{prelim-Mink}
Let $K$ be an imaginary quadratic field like in Section~\ref{CM}. A $K$-lattice $\Lambda$ of rank $r$ is an $\Oo_K$-module of rank $r$ such that $\Lambda\otimes_{\Oo_K}K$ has dimension $r$ over $K$ (\cite{BG06}, Definition C.2.5).

Let $M$ be an $r\times N$ matrix of rank $r$ with coefficients in $\Oo_K$ and let $\Lambda$ be the $K$-lattice generated by the rows of $M$. We write $\det \Lambda=\sqrt{\det(MM^\dagger)}$, where $M^\dagger=\overline{M^t}$ is the transpose of the complex conjugate.

Just as in the real case, we define for $n=1,\dotsc,r$ the $n$-th \emph{successive minimum} of a $K$-lattice $\Lambda$ a
\[
 \lambda_n=\inf\{ t>0\mid tS\text{ contains $n$ linearly independent vectors of $\Lambda$ over $K$}\},
\]
where $S$ is the unit ball in $K^r$.

Then we have the following special case of \cite{BG06}, Theorem C.2.11. See \cite{viaMD} for the deduction of Theorem \ref{AdelMink} from \cite{BG06}, Theorem C.2.11 and C.2.18.

\begin{thm}[Minkowski's second Theorem]\label{AdelMink}
   Let $K,\Lambda,r$ as above. Let $\lambda_1,\dotsc,\lambda_r$ be the successive minima of $\Lambda$. Then 
   \[
   \omega_{2r}(\lambda_1\dotsm \lambda_r)^2\leq 2^r\abs{D_K}^\frac{r}{2}(\det\Lambda)^2,
   \]
   where $D_K$ is the discriminant of $K$ and $\omega_{2r}$ the volume of the unit ball in $\R^{2r}$.
\end{thm}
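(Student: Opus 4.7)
The plan is to descend from the $K$-lattice $\Lambda$ to its underlying $\Z$-lattice of rank $2r$ in $\R^{2r}$, apply the classical Minkowski second theorem there, and then lift the inequality back to the $K$-picture via a dimension-counting argument.

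Since the rows of $M$ are $K$-linearly independent, $\Lambda$ is a free $\Oo_K$-module of rank $r$, hence a $\Z$-lattice of rank $2r$ inside the $2r$-dimensional real subspace $W\subseteq\R^{2N}$ which it spans, equipped with the Euclidean structure induced by the Hermitian form on $\C^N$. The central calculation is the covolume identity
\[
  \det{}_\Z \Lambda \;=\; (\Im\theta)^r\,(\det\Lambda)^2 \;=\; \frac{\abs{D_K}^{r/2}}{2^r}\,(\det\Lambda)^2,
\]
which I would establish by factoring the inclusion $\Oo_K^r\hookrightarrow W$ as the composition of $\Oo_K^r\hookrightarrow\C^r$ with the $\C$-linear embedding $\psi\colon\C^r\to W$ given by $(c_1,\dots,c_r)\mapsto\sum_k c_k m_k$. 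The $\Z$-covolume of $\Oo_K^r$ in $\C^r$ equals $(\Im\theta)^r$ by the rank-one computation recalled in Section~\ref{CM}, and the real Jacobian of $\psi$ equals $\abs{\det{}_\C\psi}^2=\det(\psi^*\psi)=\det(MM^\dagger)=(\det\Lambda)^2$.

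Applying the classical second Minkowski theorem to $\Lambda$ as a lattice in $W\cong\R^{2r}$ then gives
\[
  \omega_{2r}\,\mu_1\cdots\mu_{2r}\;\leq\;2^{2r}\det{}_\Z \Lambda \;=\; 2^r\abs{D_K}^{r/2}(\det\Lambda)^2,
\]
where $\mu_1\leq\cdots\leq\mu_{2r}$ are the $\Z$-successive minima of $\Lambda$ with respect to the unit ball. To return to the $K$-minima I would prove $\lambda_n\leq\mu_{2n-1}$ for every $n=1,\dots,r$: if $w_1,\dots,w_{2n-1}\in\Lambda$ realise the first $2n-1$ $\Z$-minima, their $\R$-span in $W$ has dimension $2n-1$, so its $K$-linear span has $K$-dimension at least $\lceil(2n-1)/2\rceil=n$, and therefore $n$ of the $w_i$'s must be $K$-linearly independent and have norm $\leq\mu_{2n-1}$. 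Combined with $\mu_{2n-1}\leq\mu_{2n}$ this gives
\[
  \prod_{n=1}^r\lambda_n^2\;\leq\;\prod_{n=1}^r\mu_{2n-1}\mu_{2n}\;=\;\prod_{k=1}^{2r}\mu_k,
\]
and substitution into the Minkowski bound above yields the theorem.

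The main obstacle I anticipate is pinning down the covolume identity above: one has to reconcile the normalisation $\det\Lambda=\sqrt{\det MM^\dagger}$ (which has the dimension of a length raised to the power $r$) with the $2r$-dimensional $\R$-covolume, and to check uniformly across the two residues of $D\pmod 4$ that a fundamental parallelogram of $\Oo_K\subset\C$ has area $\tfrac12\abs{D_K}^{1/2}$. Once that identity is in place, both the invocation of classical Minkowski and the linear-algebra comparison bounding $\lambda_n$ by $\mu_{2n-1}$ are routine.
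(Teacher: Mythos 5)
Your proof is correct, but it follows a genuinely different route from the paper: the paper gives no argument at all for this statement, deriving it instead as a special case of the adelic Minkowski second theorem of Bombieri--Gubler (Theorem C.2.11), with the bookkeeping of constants delegated to [Via18]. You instead restrict scalars to $\Z$, view $\Lambda$ as a full lattice of rank $2r$ in the real span $W$, and invoke only the classical second Minkowski theorem. The two key ingredients you supply --- the covolume identity $\det_\Z\Lambda=(\Im\theta)^r(\det\Lambda)^2=2^{-r}\abs{D_K}^{r/2}(\det\Lambda)^2$ and the comparison $\lambda_n\le\mu_{2n-1}$ between the $K$-minima and the $\Z$-minima --- are both sound: the freeness of $\Lambda$ over $\Oo_K$ (so that $\Lambda=\psi(\Oo_K^r)$) follows because a torsion-free module of rank $r$ generated by $r$ elements over a Dedekind domain is free; the Jacobian computation $\det G_{\R}=(\det MM^\dagger)^2$ is the standard real-versus-complex Gram determinant identity; and the dimension count showing that $2n-1$ $\R$-independent vectors contain $n$ $K$-independent ones is exactly right, giving $\prod_n\lambda_n^2\le\prod_n\mu_{2n-1}\mu_{2n}=\prod_k\mu_k$ and hence the stated constant $2^r\abs{D_K}^{r/2}$ on the nose. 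What your approach buys is a short, self-contained and entirely elementary proof tailored to imaginary quadratic $K$; what the adelic approach buys is generality (arbitrary number fields, adelic convex bodies) and a ready-made lower bound as well, neither of which is needed here. The only point worth making explicit if you write this up is the freeness of $\Lambda$ over $\Oo_K$, since for a non-principal ideal class a rank-$r$ $\Oo_K$-lattice need not be free and the covolume formula would then acquire an index factor.
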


\section{Bound for the height and degree of translates}\label{section:computations}

\subsection{Computing the degree as an intersection product}

The degree of subvarieties of $\P_2^N$ is often conveniently computed as an intersection product; we show here how to do it for a hypersurface $V$ of $E^N$, and especially for a subgroup.

Let $L$ be the class of a line in the Picard group of $\P_2$, let $\pi_i:\P_2^N\to \P_2$ be the projection on the $i$-th component and $\phi_i:\P_2^N\to\P_2^{N-1}$ the projection which omits the $i$-th coordinate. Set $\ell_i=\pi_i^*(L)$.
The $\ell_i$'s have codimension 1 in $\P_2^N$ and they generate its Chow ring, which is isomorphic as a ring to $\Z[\ell_1,\dotsc,\ell_N]/(\ell_1^{3},\dotsc,\ell_N^{3})$.

The pullback through the Segre embedding of a hyperplane of $\P_{3^N-1}$ is given by $\ell_1+\dotsb+\ell_N$ as can be seen directly from the the equation of a coordinate hyperplane in $\P_{3^N-1}$.
The degree of $V$ (which has dimension $N-1$) is therefore given by the intersection product 
\[\deg V=V.(\ell_1+\dotsb+\ell_N)^{N-1}\]
in the Chow ring of $\P_2^N$.

Define now $d_i=\deg{\phi_i}_{|V}$ if $V_i:=\phi_i(V)$ has dimension $N-1$ and $d_i=0$ otherwise.
The following proposition expresses the degree of $V$ in terms of the $d_i$. 

This bound is sharp and it  improves by a factor $6N12^{N-1}$ the bound in \cite{viaMD}. If $N=2$ we recover (for curves with CM as well) the same bound as in \cite{EsMordell}.

\begin{propo}\label{prop:bound-altezza}
 Let $V$ as above, then
 \[
  \deg V= 3^{N-1} (N-1)! \sum_{i=1}^N d_i.
 \]
In particular if $H$ is a subgroup of $E^N$ defined by $a_1 X_1+\dotsb a_N X_N=O$, then 
 \[
  \deg H= 3^{N-1} (N-1)! \sum_{i=1}^N \abs{a_i}^2.
 \]
\end{propo}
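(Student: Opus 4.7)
The plan is to compute $\deg V$ as an intersection product, moving the calculation from $\P_2^N$ down to $E^N$. Since $V\subset E^N$, the projection formula for the inclusion $E^N\hookrightarrow \P_2^N$ gives
\[\deg V = V\cdot (\ell_1 + \cdots + \ell_N)^{N-1} = V \cdot (h_1 + \cdots + h_N)^{N-1},\]
where $h_i = \pi_i^*(h_E)$ denotes the restriction of $\ell_i$ to $E^N$. The crucial observation is that $h_i^2 = 0$ in the Chow ring of $E^N$: indeed $h_E^2$ lies in $A^2(E) = 0$ since $E$ is one-dimensional, hence $h_i^2 = \pi_i^*(h_E^2) = 0$. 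This annihilates every non-squarefree monomial in the multinomial expansion, leaving
\[(h_1 + \cdots + h_N)^{N-1} = (N-1)!\sum_{j=1}^N \prod_{i\neq j} h_i.\]

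Next I would interpret each product geometrically. Since $E\subset\P_2$ is a cubic, $h_E = 3[p]$ for a point $p\in E$, so $h_i = 3[\{X_i = p_i\}]$, and multiplying gives $\prod_{i\neq j} h_i = 3^{N-1}[E_j]$, where $E_j$ is the coordinate curve $\{(X_1,\dotsc,X_N)\in E^N : X_i = p_i \text{ for } i\neq j\}$ through a generic $(p_1,\dotsc,\hat p_j,\dotsc, p_N)\in E^{N-1}$. The intersection number $V\cdot[E_j]$ then counts $V\cap E_j$ for generic parameters: when $V_j$ has dimension $N-1$ it equals $E^{N-1}$ by irreducibility, and the fibre count is exactly $d_j = \deg(\phi_j|_V)$; when $V_j$ has smaller dimension the generic point of $E^{N-1}$ lies outside $V_j$ and the intersection is empty, matching $d_j = 0$. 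Summing over $j$ yields the claimed identity $\deg V = 3^{N-1}(N-1)!\sum_j d_j$.

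For the subgroup case $H:\sum a_i X_i = O$, the degrees $d_j$ can be read off directly. Fixing generic $(X_1,\dotsc,\hat X_j,\dotsc,X_N)\in E^{N-1}$, the condition $a_j X_j = -\sum_{i\neq j} a_i X_i$ has $\abs{a_j}^2 = N(a_j)$ solutions when $a_j\neq 0$, since $a_j : E\to E$ is then an isogeny of this degree; when $a_j = 0$ the fibre is either all of $E$ or empty, so $\phi_j|_H$ is not generically finite and $d_j = 0 = \abs{a_j}^2$ by definition. Substituting into the general formula gives $\deg H = 3^{N-1}(N-1)!\sum_j \abs{a_j}^2$. The main delicacies are establishing the vanishing $h_i^2 = 0$ on $E^N$ and justifying $V\cdot[E_j] = d_j$ uniformly, including the degenerate case $\dim V_j < N-1$; once these are in place the remainder is a combinatorial expansion.
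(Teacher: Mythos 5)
Your proof is correct and follows essentially the same route as the paper: both compute $\deg V$ as the intersection number $V.(\ell_1+\dotsb+\ell_N)^{N-1}$, discard every non-squarefree monomial in the multinomial expansion, and identify each surviving term with $3^{N-1}d_i$ via the degree of the projection $\phi_i$. The only real difference is where the vanishing is produced: you restrict to $E^N$ and use $h_i^2=\pi_i^*(h_E^2)=0$ together with a generic fibre count $V\cdot[E_j]=d_j$, whereas the paper pushes each monomial forward along $\phi_i$ to $\P_2^{N-1}$ and uses $\ell_j'^{\,3}=0$ together with $\phi_{i*}[V]=d_i\,3^{N-1}\ell_1'\dotsm\ell_{N-1}'$ — the two devices are interchangeable and lead to the same bookkeeping.
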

\begin{proof}
Assume for now that $V_i$ has dimension $N-1$ all $i$.

Let $\ell_j'$ be the pullback in $\P_2^{N-1}$ of $L$ though the projection on the $j$-th coordinate.
Up to a change in the indices, we have that $\ell_j=\phi_i^*(\ell_j')$ for all $j\neq i$.
Let us consider a monomial $\ell_1^{e_1}\dotsm \ell_N^{e_N}$ coming from the expansion of $(\ell_1+\dotsb+\ell_N)^{N-1}$. As the degree is $N-1$ and there are $N$ summands, there must be an index $i$ such that $\ell_i$ does not appear in this monomial. Consider the projection $\phi_i(V)$, which is a copy of $E^{N-1}$ and is equivalent to $(3\ell_1')\dotsm (3\ell_{N-1}')$ in the Chow group of $\P_2^{N-1}$. Up to the renumbering of the $\ell_j'$ we have
\[
 V.(\ell_1^{e_1}\dotsm \ell_N^{e_N})=V.(\phi_i^*(\ell_1'^{e_1})\dotsm \phi_i^*(\ell_{N-1}'^{e_{N-1}}))=\phi_{i*}(V).(\ell_1'^{e_1}\dotsm \ell_{N-1}'^{e_{N-1}})
\]
(see \cite{Fulton}, Example 8.1.7 for the Projection Formula).
Now we show that, when expanding, the factors in which one of the exponents of $\ell_i$ is 2 do not contribute to the degree of $V$.
Indeed $\phi_{i*}(V)=d_i 3^{N-1} (\ell_1'\dotsm\ell_{N-1}')$  and if one of the $e_j$ is at least two then the whole $V.(\ell_1^{e_1}\dotsm \ell_N^{e_N})$ must be zero.
The only monomials of degree $N-1$ with all exponents smaller than two are those with $e_i=0$ and $e_j=1 \forall j\neq i$; these monomials appear with a coefficient of $(N-1)!$ in the expansion of $(\ell_1+\dotsb+\ell_N)^{N-1}$, and for these monomials we have $ V.(\ell_1^{e_1}\dotsm \ell_N^{e_N})=d_i 3^{N-1}$.

In conclusion the degree of $V$ is given by
\[
 \deg V=3^{N-1} (N-1)! \sum_{i=1}^N d_i.
\]

 Notice that this formula remains true if for some of the $i$'s the restriction of $\phi_i$ to $V$ has a smaller dimension. In this case we can omit the indices on which the dimension decreases and apply the same argument.
 
 If $H$ is a subgroup as in the statement, then $d_i$ is the degree of the multiplication by $a_i$, which is $\abs{a_i}^2$ (See \cite{SilvermanAdvancedTopics}, Chapter II, \S 1, Corollary 1.5).
 \end{proof}

\subsection{A bound for the height of a translate}\label{bounddegalt}

Here we  prove some general bounds for the degree and the height of a proper translate $H+P$ in $E^2$ in terms of $\hat h(P)$ and of the coefficients of the equation defining the algebraic subgroup $H$. 

We will often identify algebraic subgroups with matrices and with the lattices generated by their rows as explained in Section~\ref{prelim-S}. 

\subsubsection{A lemma on adjugate matrices}\label{secMatrices}
Let $A$ be a $n\times n$ matrix with complex coefficients. Let $a_i\in\C^n$ be the rows of $A$.
\begin{D}\label{defU*} The \emph{adjugate matrix of $A$}, denoted $A^*$, is  the transpose of the matrix $((-1)^{i+j}\det M_{ij})_{ij}$, where $M_{ij}$ is the $(n-1)\times(n-1)$ minor obtained from $A$ after deleting the $i$-th row and the $j$-th column.
\end{D}
The adjugate matrix has the property that
\[
AA^*=A^*A=(\det A)\mathrm{Id}
\]
and its entries are bounded as it follows:
\begin{lem}\label{lemmaadjugate}
Let $A\in M_{n\times n}(\C)$  be the matrix with rows $a_1,\dotsc,a_n\in\C^n$ and $B\in M_{(n-1) \times n}(\C)$ be the matrix with rows $a_2,\dotsc,a_n$.
Then the norm of the first row of $A^*$ is equal to $\sqrt{\det (B{B}^\dagger)}$ where $B^\dagger$ is the transpose of the complex conjugate.
\end{lem}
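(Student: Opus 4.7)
The plan is to derive the identity directly from the Cauchy--Binet formula applied to the product $BB^\dagger$. Recall that for an $(n-1)\times n$ matrix $B$, Cauchy--Binet gives
\[
\det(BB^\dagger)=\sum_{j=1}^n \det(B^{(j)})\,\overline{\det(B^{(j)})}=\sum_{j=1}^n \bigl|\det B^{(j)}\bigr|^2,
\]
where $B^{(j)}$ denotes the $(n-1)\times(n-1)$ matrix obtained by deleting the $j$-th column of $B$.

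Since $B$ is the submatrix of $A$ formed by the rows $a_2,\dotsc,a_n$, the matrix $B^{(j)}$ coincides with the minor of $A$ obtained by removing the first row and the $j$-th column; in the notation of Definition~\ref{defU*} this is precisely $M_{1j}$. Unwinding the definition of $A^*$, the entries of its first row are, up to signs $(-1)^{1+j}$, exactly the determinants $\det M_{1j}$; the signs disappear after taking absolute values, so the squared Euclidean norm of the first row of $A^*$ equals
\[
\sum_{j=1}^n \bigl|\det M_{1j}\bigr|^2=\sum_{j=1}^n \bigl|\det B^{(j)}\bigr|^2=\det(BB^\dagger),
\]
and taking square roots yields the claim.

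The only point that requires a small amount of care is the bookkeeping with the row/column conventions for the adjugate, to make sure that the minors $M_{1j}$ (rather than $M_{j1}$) are the ones that end up in the first row of $A^*$; once this is pinned down, the proof reduces to a single invocation of Cauchy--Binet and I do not foresee any substantive obstacle.
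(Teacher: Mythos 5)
Your argument is the same one-line Cauchy--Binet computation as the paper's, and the substance is correct. However, the one point you flagged as needing care is exactly the point you resolve the wrong way around: with Definition~\ref{defU*}, where $A^*$ is the \emph{transpose} of the cofactor matrix, one has $(A^*)_{1j}=(-1)^{1+j}\det M_{j1}$, so the first row of $A^*$ is built from minors of $A$ obtained by deleting the first \emph{column} --- these are not minors of $B$ at all, and their norm need not equal $\sqrt{\det(BB^\dagger)}$. The minors $\det M_{1j}$ of $B$ populate the first \emph{column} of $A^*$, since $(A^*)_{i1}=(-1)^{1+i}\det M_{1i}$. The paper's own proof correctly speaks of the first column (the statement of the lemma carries the same row/column slip you do), and it is the first column that is actually used in Proposition~\ref{boundsotto}, where $U^*$ is applied to the vector $(u(P),0,\dotsc,0)^t$. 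So the identity $\sum_j\abs{\det M_{1j}}^2=\det(BB^\dagger)$ you establish is the right one; only the label ``first row'' should read ``first column.''
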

\begin{proof}
Applying the Cauchy-Binet formula to the matrices $B,{B}^\dagger$ we have that $\det (B{B}^\dagger)$ is equal to the sum of the squares of the absolute values of the determinants of all $(n-1)\times(n-1)$ minors of $B$, but these determinants are (up to the sign) the entries of the first column of $A^*$.
\end{proof}

\subsubsection{A preliminary bound}\label{subsec-alttrasl}
In order to give a general bound for  the height of $H+P$ we use an argument based on linear algebra, and some bounds on heights from Subsection \ref{SezioneAltezze}. We prove the following proposition, which generalises \cite{EsMordell}, Proposition 5.1 and  improves on \cite{viaMD}, Proposition 4.1:
\begin{propo}\label{boundsotto}Let $P$ be a point in $E^N$. Let $H$ be a component of the  algebraic subgroup in $E^N$ defined by the equation $a_1X_1+a_2 X_2 +\dots +a_N X_N=O$, with $u=(a_1,\dots,a_N)\in \rend^N\setminus\{0\}$. Then 
$$\deg (H+P)\leq 3^{N-1} (N-1)! ||{u}||^2$$ 
where $\norm{u}$ denotes the euclidean norm of $u$,
and 
$$h_2(H+P)\leq  3^{N-1} N!\left(\hat h(u(P))+N C(E)\norm{u}^2 \right)$$
where $u(P)=a_1 P_1+\dotsb + a_N P_N$ and $C(E)$ is defined in Proposition~\ref{confrontoaltezze}.

%

\end{propo}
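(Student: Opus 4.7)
My plan is to split the proof into the two inequalities. The degree bound follows almost immediately from Proposition \ref{prop:bound-altezza}, while the height bound comes from combining Zhang's inequality with an upper estimate for the essential minimum $\hat\mu(H+P)$, which in turn is obtained through an orthogonal decomposition in the Mordell--Weil space.

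For the degree, the algebraic subgroup $\ker u\subset E^N$ is a hypersurface (since $u\ne 0$), so by Proposition \ref{prop:bound-altezza} its degree equals $3^{N-1}(N-1)!\sum_i \abs{a_i}^2 = 3^{N-1}(N-1)!\norm{u}^2$. The component $H$ satisfies $\deg H\le \deg\ker u$, and since translation by $P$ preserves the degree in the fixed Segre embedding (the underlying line bundle being symmetric, its numerical class is invariant under translations), I conclude $\deg(H+P)\le 3^{N-1}(N-1)!\norm{u}^2$.

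For the height, Zhang's inequality \eqref{zhangh2} together with the comparison \eqref{mu2mu^} gives
\begin{equation*}
   h_2(H+P)\le (1+\dim H)\deg(H+P)\mu_2(H+P)\le N\deg(H+P)\bigl(\hat\mu(H+P)+NC(E)\bigr),
\end{equation*}
so once one has the degree bound it suffices to prove $\hat\mu(H+P)\le \hat h(u(P))/\norm{u}^2$. To establish this I would first absorb torsion into $P$ so as to reduce to the case where $H$ is the identity component of $\ker u$. Working inside $E^N(\overline\Q)\otimes_\Z \R$ endowed with the positive definite form attached to $\hat h$, the subgroups $H$ and $H^\perp$ induce an orthogonal direct sum decomposition, so $P$ splits as $P=P_1+P_2$ with $P_1$ in the $H$-part and $P_2$ in the $H^\perp$-part. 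Approximating $-P_1$ by a Zariski-dense sequence of algebraic points $Q_n\in H(\overline\Q)$ produces algebraic points $Q_n+P\in H+P$ whose Néron--Tate heights tend to $\hat h(P_2)$, so $\hat\mu(H+P)\le \hat h(P_2)$. Since $u$ vanishes on $H$ and restricts to an isogeny $u|_{H^\perp}\colon H^\perp\to E$ whose induced linear map on Lie algebras has operator norm $\norm{u}$, the scaling behaviour of $\hat h$ under this isogeny gives $\hat h(u(P))=\hat h(u(P_2))=\norm{u}^2\hat h(P_2)$, hence $\hat h(P_2)=\hat h(u(P))/\norm{u}^2$. Substituting everything yields exactly $h_2(H+P)\le 3^{N-1}N!\bigl(\hat h(u(P))+NC(E)\norm{u}^2\bigr)$.

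The main technical obstacle is the orthogonal-decomposition-and-approximation step. The splitting $P=P_1+P_2$ lives in $E^N(\overline\Q)\otimes_\Z\R$ rather than in $E^N(\overline\Q)$ itself, so $P_1$ and $P_2$ need not be algebraic; producing an explicit Zariski-dense sequence of algebraic points of $H+P$ whose heights realise the limit $\hat h(P_2)$ requires some care, exactly as in the rank-one case of \cite{EsMordell}. The remaining constants come out cleanly from the sharp forms of Zhang's inequality and of \eqref{mu2mu^}, and it is this sharpness that enables the explicit factor $NC(E)$ rather than a larger one in the final bound.
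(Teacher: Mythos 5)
Your proposal is correct in substance and shares the overall skeleton of the paper's argument (degree via Proposition~\ref{prop:bound-altezza}, height via Zhang's inequality \eqref{zhangh2}, the comparison \eqref{mu2mu^}, and the reduction to showing $\hat\mu(H+P)\le \hat h(u(P))/\norm{u}^2$ through the decomposition $P=P_0+P^\perp$ along $H$ and $H^\perp$), but it reaches the key identity by a different mechanism. The paper computes $\hat h(P^\perp)=\hat h(u(P))/\norm{u}^2$ by pure linear algebra: it forms the matrix $U$ with rows $u,u_2,\dotsc,u_N$ (the last $N-1$ rows defining $H^\perp$), applies the adjugate matrix and the Cauchy--Binet formula (Lemma~\ref{lemmaadjugate}, together with $\abs{\det U}=\det\Lambda\cdot\det\Lambda^\perp$), and then quotes \cite{preprintPhilippon} for $\hat\mu(H+P)=\hat h(P^\perp)$. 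You instead obtain the same identity from the scaling of $\hat h$ under the isogeny $u|_{H^\perp}\colon H^\perp\to E$, and you re-derive the (one-sided) essential-minimum bound by translating small points of $H$; since only $\hat\mu(H+P)\le\hat h(P^\perp)$ is needed, this is legitimate and arguably more self-contained. Two points should be firmed up. First, the ``operator norm $\norm{u}$'' heuristic does not by itself give the constant: the clean justification is that, up to finite index, $H^\perp$ is parametrised by $R\mapsto(\bar a_1R,\dotsc,\bar a_NR)$, whence $u(Q)=[\norm{u}^2]R$ and $\hat h(Q)=\norm{u}^2\hat h(R)$, so $\hat h(u(Q))=\norm{u}^2\hat h(Q)$ on $H^\perp$ (note also that one can take $P_0\in H(\qubar)$ and $P^\perp\in H^\perp(\qubar)$ genuinely algebraic, unique up to torsion in $H\cap H^\perp$, so no passage to $E^N(\qubar)\otimes_\Z\R$ is needed). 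Second, the orthogonality of $H(\qubar)$ and $H^\perp(\qubar)$ for the N\'eron--Tate pairing, which your density argument uses to split $\hat h(Q+P)=\hat h(Q+P_0)+\hat h(P^\perp)$, is exactly the nontrivial input the paper outsources to \cite{preprintPhilippon}; for $E^N$ with the coordinate-sum height it can be checked directly with the hermitian pairing \eqref{hpairing}, but it must either be proved or cited, not merely asserted. With those two justifications supplied, your constants combine exactly as in the paper to give $h_2(H+P)\le 3^{N-1}N!\left(\hat h(u(P))+NC(E)\norm{u}^2\right)$.
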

\begin{proof}
By \eqref{prop:bound-altezza}  we get
\[
 \deg (H+P)=\deg H \leq 3^{N-1}(N-1)! \norm{u}^2;
\]
this proves the first part of the statement.

Let $\Lambda=\langle u\rangle_{\End(E)}\subseteq\C^N$ be the lattice generated by $u$, and $\Lambda^\perp$ its orthogonal lattice, defined as the set 
\[
 \Lambda^\perp=\{x\in\rend^N\mid \forall\lambda\in\Lambda\quad \scal{x}{\lambda}_\C =0\},
\]
where $\scal{\cdot}{\cdot}_\C$ is the standard hermitian product in $\C^N$.

%

Define $u_1=u$ and let $u_2,\dotsc,u_N$ be the rows of the matrix attached to the abelian subvariety $H^\perp$, so that $\Lambda^\perp=\langle u_2,\dotsc,u_N\rangle_{\End(E)}.$

For any point $P\in E^N$ there are  two points $P_0\in H$, $P^\perp\in H^\perp$, unique up to torsion points in $H\cap H^\perp$, such that $P=P_0+P^\perp$.

Let $U$ be the $N\times N$ matrix with rows $u=u_1,\dotsc,u_N$, and let $\det U$ be its determinant.

Notice that
\[\det \Lambda=\norm{u}\]
and
\begin{equation}\label{orto.lattice}
 \abs{\det U}=\det\Lambda \cdot \det \Lambda^\perp
\end{equation}
because $\Lambda$ and $\Lambda^\perp$ are orthogonal (this follows again from the Cauchy-Binet formula).

We remark that $u(P_0)=0$ because $P_0\in H$, and $u_i(P^\perp)=0$ for all $i=2,\dotsc,N$ because $P^\perp\in H^\perp$.

Therefore

\begin{equation*}
UP^\perp=
\left(
\begin{array}{c}
u(P^\perp)\\
0\\
\vdots\\
0
\end{array}
\right)
=
\left(
\begin{array}{c}
u(P_0+P^\perp)\\
0\\
\vdots\\
0
\end{array}
\right)
=
\left(
\begin{array}{c}
u(P)\\
0\\
\vdots\\
0
\end{array}
\right),
\end{equation*}
hence
\begin{equation*}
(\det U)P^\perp=U^*UP^\perp=U^*\left(
\begin{array}{c}
u(P)\\
0\\
\vdots\\
0
\end{array}
\right)
\end{equation*}
where $U^*$ is the adjugate matrix of $U$ from Definition \ref{defU*}.

Computing canonical heights and applying Lemma \ref{lemmaadjugate} yields
\begin{equation*}
\abs{\det U}^2\hat h(P^\perp)=\hat h\left((\det U)P^\perp\right)=\det(\Lambda^\perp)^2 \hat h(u(P)),
\end{equation*}
so by \eqref{orto.lattice}
\[
   \hat h(P^\perp)=\frac{\hat h(u(P))}{\norm{u}^2}.
\]

Recall inequality \eqref{mu2mu^}, which gives
 \begin{equation*}
\mu_2(H+P)\leq \hat\mu(H+P) +N C(E)
\end{equation*}

{By \cite{preprintPhilippon} we know that}
\[
\hat\mu(H+P)=\hat h(P^\perp)
\]
and therefore, by Zhang's inequality
\begin{align}
\notag h_2(H+P)&\leq N(\deg H)\mu_2(H+P)\leq\\
\notag &\leq N(\deg H)( \hat\mu(H+P) + NC(E)) =\\
\notag &= N(\deg H)(\hat h(P^\perp)+NC(E)) =\\
\label{ultimarigacost}&=  N\deg H \left(\frac{\hat h(u(P))}{\norm{u}^2} +NC(E)\right).
\end{align}

By \eqref{prop:bound-altezza}  we get
\[
 \deg H\leq 3^{N-1}(N-1)! \norm{u}^2,
\]
so \eqref{ultimarigacost} becomes
\[
 h_2(H+P)\leq 3^{N-1} N!\left(\hat h(u(P))+N C(E)\norm{u}^2 \right).\qedhere
\]
\end{proof}

\subsection{Geometry of numbers}\label{geonum}
In this section  we use  classical tools from the Geometry of Numbers that we have to carefully adapt to $\End(E)$-modules endowed with the hermitian product induced by the N\'eron-Tate height. This is essentially a consequence of Minkowski's Second Theorem. More innovative is the use of Minkowski's First Theorem to construct our auxiliary translate with height and degree sharply bounded.

We assume $E$ to have CM and we use the notations of Section~\ref{CM}. 
 
For points $Q,R\in E$, consider the pairing defined by Philippon in \cite{preprintPhilippon}  
\begin{equation}\label{hpairing}\langle Q,R\rangle=\langle Q,R\rangle_{NT}-\frac{1}{\sqrt D} \langle Q,\sqrt D R\rangle_{NT}\end{equation}
where $\langle\cdot,\cdot\rangle_{NT}$ is the N\'eron-Tate pairing and $D$ is a squarefree negative integer such that $K=\qe(\sqrt D)$.
This pairing is hermitian and makes the N\'eron-Tate height into a semi-norm, since $\langle Q, Q\rangle=2\hat h(Q)$. In the following lemma we denote by $\norm{Q}_h^2:=\langle Q, Q\rangle$.

The following Lemma is a refinement and corrected version of \cite{viaMD} Lemma 5.2.
The reader should be mindful of the difference between $\Oo_K$-modules (typically considered in the literature) and $\End(E)$-modules where $\End(E)$ might only  be  an order.
\begin{lem}\label{lemma.good.generators}
   Let $\Gamma\subset E$ be a finitely generated $\End(E)$-module of rank $r$ (in the sense that $\dim_\C \Gamma\otimes_{\End(E)}\C=r$).
   Then there exist elements $g_1\dotsc,g_r\in\Gamma$ which generate a submodule of $\Gamma$ of finite index 
and such that, for all $a_i\in \End(E)$ it holds
 $$\hat{h}\left(\sum_{i=1}^r a_i g_i\right)\geq \cdodici(r,E)\sum_{i=1}^r\abs{a_i}^2\hat{h}(g_i),$$
 where $\cdodici(r,E)=\frac{2^{2r-2}}{r^2(2r)!^2\abs{D_K}^{r}}$.
   
\end{lem}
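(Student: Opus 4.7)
The plan is to equip $\Gamma$ (modulo torsion) with the structure of a Hermitian $K$-lattice via the pairing of equation (\ref{hpairing}), use Minkowski's Second Theorem (Theorem~\ref{AdelMink}) to extract an almost-orthogonal generating set realising the successive minima, and then convert this into the asserted lower bound through a Gram-matrix eigenvalue estimate.

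First, I would pass to $\Gamma/\Gamma_\tors$, a torsion-free rank-$r$ module over $\rend=\Z+f\Oo_K$, and view it inside the Hermitian $K$-vector space $\Gamma\otimes K$ with norm $\|Q\|_h^2=2\hat h(Q)$. Applying Theorem~\ref{AdelMink} to the associated $K$-lattice yields successive minima $\lambda_1\leq\dotsb\leq\lambda_r$, realised by $K$-linearly independent elements $g_1,\dotsc,g_r$ with $\|g_i\|_h=\lambda_i$, i.e.\ $2\hat h(g_i)=\lambda_i^2$. Because the $g_i$ are $K$-linearly independent and $\Gamma$ has $\rend$-rank $r$, the $\rend$-module they generate has $K$-span equal to all of $\Gamma\otimes K$ and therefore has finite index in $\Gamma/\Gamma_\tors$, hence in $\Gamma$.

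Next, for $v=\sum a_i g_i$ set $M=(\langle g_i,g_j\rangle)_{ij}$; the identity $2\hat h(v)=a^\dagger M\bar a$ and the Cauchy--Schwarz bound $|\langle g_i,g_j\rangle|\leq\lambda_i\lambda_j$ give $M=DND$, where $D=\mathrm{diag}(\lambda_1,\dotsc,\lambda_r)$ and $N$ is a Hermitian positive-definite matrix with $N_{ii}=1$ and $|N_{ij}|\leq 1$. Hence $\|N\|_{op}\leq r$, so its positive eigenvalues lie in $[\lambda_{\min}(N),r]$, which forces $\lambda_{\min}(N)\geq (\det N)/r^{r-1}$. On the other hand $\det M=(\det\Lambda)^2$, so $\det N=(\det\Lambda)^2/(\lambda_1\dotsm\lambda_r)^2$, and Theorem~\ref{AdelMink} bounds this below by $\omega_{2r}/(2^r|D_K|^{r/2})$. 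Putting the pieces together yields
$$\hat h\Bigl(\sum a_i g_i\Bigr)=\tfrac12 a^\dagger M\bar a\geq \lambda_{\min}(N)\sum_{i=1}^r |a_i|^2\hat h(g_i),$$
and deliberately lossy estimates (expanding $\omega_{2r}=\pi^r/r!$, bounding $r^{r-1}r!$ by $(2r)!$, and similar) can be arranged to produce the stated constant $\cdodici(r,E)$.

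The principal technical obstacle is the one the authors flag just before the statement: Theorem~\ref{AdelMink} is naturally formulated for $\Oo_K$-lattices, whereas the lemma demands a generating set for a finite-index \emph{$\rend$}-submodule, and $\rend=\Z+f\Oo_K$ need not be the maximal order. The natural workaround is to extract a Minkowski basis of $\Oo_K\Gamma$ and push it inside $\Gamma$ by multiplying by the conductor $f$; this scales norms by $f^2$ and is the same as replacing $|D_K|$ by the discriminant $f^2|D_K|$ of the order in the Minkowski inequality. Absorbing this dependence crudely into $|D_K|^r$ (rather than $|D_K|^{r/2}$) is exactly what accounts for the doubled exponent of $|D_K|$ in $\cdodici(r,E)$ compared with the maximal-order case, and tracking it cleanly is the delicate point where the formula from \cite{viaMD} required correction.
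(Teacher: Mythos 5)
Your core argument is a genuinely different route to the key inequality and it is sound. After extracting vectors realising the successive minima of the Néron--Tate hermitian form, the paper does not use a Gram-matrix estimate: for a boundary point of the unit ball written in the coordinates of the normalised minima vectors it inscribes explicit simplices and compares volumes, obtaining the $\ell^1$-type bound $\norm{\sum_i x_i v_i}_h\geq \frac{2^{r-1}}{r(2r)!\abs{D_K}^{r/2}}\sum_i\abs{x_i}\norm{v_i}_h$, which is then squared. Your alternative, writing the Gram matrix as $DND$ with $N$ having unit diagonal and off-diagonal entries of modulus at most $1$ (Cauchy--Schwarz), so that $\lambda_{\min}(N)\geq \det N/r^{r-1}\geq \omega_{2r}/(2^r\abs{D_K}^{r/2}r^{r-1})$ by Minkowski, is correct and in fact yields a \emph{sharper} constant than $\cdodici(r,E)$, so weakening to the stated bound is legitimate. (One small point of care: the minima vectors need not generate the whole lattice, so $\det M$ is the squared covolume of the sublattice they generate, which is at least that of the full lattice; the inequality still goes the right way. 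Also, like the paper, you must lift from $\Gamma/\Gamma_{\tors}$ back to $\Gamma$, which is harmless since $\hat h$ is torsion-invariant.)

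Your final paragraph, however, misdiagnoses both the role of the conductor and the origin of the exponent $\abs{D_K}^r$, and taken literally it would wreck the constant. Multiplying a Minkowski basis of the $\Oo_K$-span by $f$ so as to land in $\Gamma$ scales $\hat h(g_i)$ and $\hat h\left(\sum_i a_i g_i\right)$ by the same factor $\abs{f}^2$, so it cancels on both sides and leaves the constant untouched; this is \emph{not} the same as running Minkowski's theorem with the order discriminant $f^2\abs{D_K}$. If you really replaced $\abs{D_K}^{r/2}$ by $f^{r}\abs{D_K}^{r/2}$ you could not absorb $f^{r}$ into $\abs{D_K}^{r}$, since $f$ is not bounded in terms of $D_K$ and $\cdodici(r,E)$ contains no $f$ at all. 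In the paper the exponent $\abs{D_K}^{r}$ arises simply from squaring the linear inequality quoted above, and the conductor enters only through the harmless final rescaling $g_i=fg_i'$. With that correction (keep the scaling-cancellation argument, drop the order-discriminant detour) your proof is complete.
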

\begin{proof}
   Let $\Gamma_\C=\Gamma\otimes_{\End(E)}\C$. This is a $\C$-vector space of dimension $r$ and the height function $\hat{h}$ extends to the square of a norm $\norm{\cdot}_h$ on $\Gamma_\C$ through the hermitian product defined by (\ref{hpairing}), as clarified above. Note that  $\Gamma/\Gamma_{tors}$ embeds in $\Gamma_\C$. 
   
   By  \cite{Lang} Ex.13 pag. 168, the $\Oo_K$-module $(\Gamma/\Gamma_{tors}) \otimes \Oo_K$ contains a free submodule $\tilde{\Gamma}$ of finite index with the same rank. Let $p_1\dotsc,p_r$ be an $\Oo_K$-basis of $\tilde{\Gamma}$. We identify $\Gamma_\C$ with $\C^{r}$ through the choice of the basis $p_1,\dotsc,p_r$. $\tilde{\Gamma}$ is a $K$-lattice in $K^{r}$ in the sense of \cite[Definition C.2.1]{BG06}. Let $\lambda_1\dotsc,\lambda_{r}$ be its successive minima.
   
   Applying  Minkowski's Theorem \ref{AdelMink} we get 
   \[
   \omega_{2r}(\lambda_1\dotsm\lambda_{r})^2\leq 2^{r}\abs{D_K}^\frac{r}{2} (\det \tilde\Gamma)^2.
   \]

   Let $v_1\dotsc,v_r$ be elements of $\tilde\Gamma$ which attains the successive minima. Then $\norm{v_i}_h^2=\lambda_i^2$.

   Let $v_i=\sum_{j=1}^r v_{ij}p_j$  with $v_{ij}\in \Oo_K$ and let $\Lambda$ be the $\End(E)$-submodule of $\tilde{\Gamma}$ generated by the $v_i$.

   Now we write $Vol$ for the volumes in the real Lebesgue measure and $Vol\Oo_K=\frac{\abs{D_K}^{1/2}}{2}$ for the volume of $\Oo_K$ in $\C$ (see for instance \cite{Neukirch} Proposition 5.2  where his volume is twice our Lebesgue volume as he says just above the proposition).
   
   Since the $v_i$ form an $\End(E)$-basis of $\Lambda$, the matrix of the $v_{ij}$ has the same determinant of the $\End(E)$-lattice $\Lambda$, thus $\abs{\det(v_i)}=[\tilde{\Gamma}:\Lambda]\det{\tilde\Gamma}$.
   Let $w_i=v_i/\norm{v_i}_h$ for $i=1,\dotsc,r$ and define $B^*$ as
   \[
      B^*=\left\{y\in\C^{r}\,\,{\mathrm{s.t.}}\,\, \norm{\sum_{i=1}^{r}y_i w_i}_h\leq 1\right\},
   \]
   where the coordinates are always expressed in terms of the $p_i$, so that $y=\sum_{i=1}^r y_{i}p_i$.
   
   Since the change of basis $p_i\to w_i$ sends $B^*$ to the unit ball and  $\abs{\det(w_i)}=\frac{\abs{\det(v_i)}}{\prod_{i=1}^{r}\norm{v_i}_h} $; we have that
   \[
   Vol(B^*)=\frac{\omega_{2r}}{\abs{\det(w_i)}^2}=\frac{\omega_{2r}}{[\tilde{\Gamma} : \Lambda]^2(\det\tilde\Gamma)^2}\prod_{i=1}^{r}\norm{v_i}_h^2\leq\frac{2^r\abs{D_K}^{r/2}}{[\tilde{\Gamma} : \Lambda]^2}\leq 2^r\abs{D_K}^{r/2}.
   \]

    Let $e_j,j=1,\dotsc,r$ be an orthonormal basis of $\C^r$ with respect to $\langle \,,\,\rangle$. 
  Let us identify $\mathbb{C}^{r}$ with $\mathbb{R}^{2r}$ and let $e_j$, ${\bf{i}}e_j$ be the real basis in
$\mathbb{R}^{2r}$ (${\bf{i}}$ is the complex  imaginary number such that ${\bf{i}}^2=-1$). Since our pairing is hermitian this is a Real orthonormal basis. 

   Let now $y$ be a fixed point on the boundary of $B^*$, and fix an index $s=1,\dotsc,r$. 
  
Then for each $s$ the set $B^*$ contains the convex closure of the points $\pm |\Re y_s|e_s$
and $\pm{\bf i}e_s$ and  $\pm e_j$,$\pm {\bf{i}} e_j$ for  $j\neq s$. 
For every choice of the signs , these two simplices have in common only the basis and so a set of zero volume, thus  the volume of the union is 
$$\frac{\abs{\Re y_s}2^{2r}}{(2r)!}.$$ 

Similarly, $B^*$ also contains the  convex closure of the points $\pm|\Im y_s|{\bf i}e_s$ and 
$ \pm e_s$ and  $\pm e_j$,$\pm {\bf{i}} e_j$ for  $j\neq s$.
This shows that
\begin{align*}
   \frac{\abs{\Re y_s}2^{2r}}{(2r)!}&\leq Vol(B^*), & \frac{\abs{\Im y_s}2^{2r}}{(2r)!}&\leq Vol(B^*), & s&=1,\dotsc,r
\end{align*}

   Counting real and imaginary part together gives
   \[
   \abs{y_s}\frac{2^{2r}}{(2r)!}\leq (\abs{\Re y_s}+\abs{\Im y_s})\frac{2^{2r}}{(2r)!}\leq 2 Vol(B^*)\quad \forall s=1,\dotsc,r.
   \]
   Summing for all $s$'s we get
   \[
   \sum_{i=1}^{r}\abs{y_i}\leq \frac{r(2r)!}{2^{2r-1}}Vol (B^*)=\frac{r(2r)!\abs{D_K}^{r/2}}{2^{r-1}}\norm{\sum_{i=1}^{r}y_iw_i}_h,
   \]
   because $y$ was taken on the boundary of $B^*$.
   Rewrite now $\frac{y_i}{\norm{v_i}_h}=x_i$ so that  the previous inequality becomes
   \begin{equation}\label{eq.lower.bound.norma}
      \norm{\sum_{i=1}^{r}x_i v_i}_h\geq\frac{2^{r-1}}{r(2r)!\abs{D_K}^{r/2}}\sum_{i=1}^{r}\abs{x_i}\norm{v_i}_h.
\end{equation}

   Choose $p_1',\dotsc,p_r'$ representatives of $p_1,\dotsc,p_r$ in $\tilde\Gamma$ and define
   \[
   g'_i=\sum_{j=1}^r v_{ij}p_j'\quad i=1,\dotsc,r.
   \]
   The $g'_i$ generate a submodule of finite index in $\tilde\Gamma$. In addition, we know that $2\hat{h}\left(\sum_{i=1}^r a_i g'_i\right)=\norm{\sum_{i=1}^r a_i v_i}_h^2$ and $2\hat h (g'_i)=\norm{v_i}_h^2$ and therefore from \eqref{eq.lower.bound.norma} with $x_i=a_i$ it follows that 
   \begin{multline*}
    \hat{h}\left(\sum_{i=1}^r a_i g'_i\right)=\frac{1}{2}\norm{\sum_{i=1}^r a_i v_i}_h^2\geq \frac{1}{2}\frac{2^{2r-2}}{r^2(2r)!^2\abs{D_K}^{r}}\left(\sum_{i=1}^{r}\abs{x_i}\norm{v_i}_h\right)^2\geq\\
    \geq\frac{2^{2r-2}}{r^2(2r)!^2\abs{D_K}^{r}}\sum_{i=1}^{r}\abs{x_i}^2\frac{\norm{v_i}_h^2}{2}=\frac{2^{2r-2}}{r^2(2r)!^2\abs{D_K}^{r}}\sum_{i=1}^{r}\abs{x_i}^2\hat h (g'_i),
   \end{multline*}
    as the square of a sum of positive quantities is bigger than the sum of their squares. Thus the thesis holds for the $\Oo_K$-module generated by the $g'_i$ in $\tilde\Gamma$.
   In order to get a $\End(E)$-submodule of $\Gamma$ simply set $g_i=fg'_i$. Now the $g_i$ lie in $\Gamma$, because $f\Oo_K\subset \End(E)$ and the desired inequality still holds because  multiplicative factors cancel on both sides.
\end{proof}

If $u$ is a vector in $\C^N$, we denote by $\norm{u}$ its euclidean norm and for a linear form $L\in\C[X_1,\dotsc,X_N]$ we denote by $\norm{L}$  the  euclidean norm of the vector of its coefficients.

  We now use Lemma~\ref{lemma.good.generators} to prove a version for CM elliptic curves of \cite[Lemma 7.5]{ExpTAC}, which follows a classical argument.  We repeat the proof for clarity.

\begin{lem}\label{LemmaHabCM}
Let $1\leq m \leq N$  be integers and let $P=(P_1,\dotsc,P_N)\in B\subseteq E^N$, where $B$ is a torsion variety of dimension $m$.

Then, there exist linear forms $L_1,\dotsc,L_m\in\C[X_1,\dotsc,X_N]$ such that $$\norm{L_j}\leq 1\quad \forall j,$$  and for all $\mathbf{t}=(t_1,\ldots,t_N)\in\End(E)^N$
\begin{equation*}
\hat h(t_1P_1+\dotsb+t_NP_N)\leq \cdiciassette(N,m,E) \max_{1\leq j\leq m}\{|L_j(\mathbf{t})|^2\}\hat h (P).
\end{equation*}
The constant $\cdiciassette(N,m,E)$ is given by 
\begin{equation*}
   \cdiciassette(N,m,E)=\frac{mN}{\cdodici(m,E)}=\frac{m^3(2m)!^2\abs{D_K}^{m}N}{2^{2m-2}}.
\end{equation*}
 where $\cdodici(r,E)$ is the constant defined in Lemma~\ref{lemma.good.generators}.
\end{lem}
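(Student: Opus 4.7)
The strategy is to reduce both sides of the desired inequality to statements about a canonical $\End(E)$-basis of the module generated by the $P_i$, provided by Lemma~\ref{lemma.good.generators}, and then to sandwich the left-hand side between a Cauchy--Schwarz upper bound and the sharp lower bound of that lemma.

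First, I would consider the $\End(E)$-submodule $\Gamma \subset E$ generated modulo torsion by $P_1, \dotsc, P_N$. Since $P \in B$ and $B$ is a torsion variety of dimension $m$, the $N-m$ independent $\End(E)$-linear relations cutting out $B$ force $\Gamma$ to have rank $r \le m$. Applying Lemma~\ref{lemma.good.generators} to $\Gamma$ produces generators $g_1, \dotsc, g_r$ of a submodule of finite index $d$ such that
\[
\hat h\!\left(\sum_{j=1}^{r} a_j g_j\right) \ge \cdodici(r, E) \sum_{j=1}^r \abs{a_j}^2 \hat h(g_j),\quad a_j \in \End(E).
\]
Writing $d P_i = \sum_j \beta_{ij} g_j$ with $\beta_{ij} \in \End(E)$ and setting $\alpha_{ij} = \beta_{ij}/d \in K$, I obtain a formal decomposition $P_i = \sum_j \alpha_{ij} g_j$; scaling by $d^2$ extends the inequality above to these $K$-coefficients, so $\hat h(P_i) \ge \cdodici(r, E) \sum_j \abs{\alpha_{ij}}^2 \hat h(g_j)$ for every $i$.

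Next, set $\ell_j(\mathbf{t}) = \sum_i \alpha_{ij} t_i$ and $A_j = \sum_i \abs{\alpha_{ij}}^2$, and define
\[
L_j = A_j^{-1/2} \sum_{i=1}^N \alpha_{ij} X_i,\qquad j = 1, \dotsc, r,
\]
which satisfies $\norm{L_j} = 1$; if $r < m$ pad with zero linear forms $L_{r+1} = \dotsb = L_m = 0$. Since $\sum_i t_i P_i = \sum_j \sqrt{A_j}\, L_j(\mathbf{t})\, g_j$ modulo torsion, Cauchy--Schwarz on the $r$ summands gives
\[
\hat h\!\left(\sum_i t_i P_i\right) \le r \sum_j A_j \abs{L_j(\mathbf{t})}^2 \hat h(g_j) \le r \max_{1 \le j \le m}\abs{L_j(\mathbf{t})}^2 \sum_j A_j \hat h(g_j).
\]
Summing the lower bound from the previous step over $i$ yields $\hat h(P) \ge \cdodici(r, E) \sum_j A_j \hat h(g_j)$, and combining gives
\[
\hat h\!\left(\sum_i t_i P_i\right) \le \frac{r}{\cdodici(r, E)} \max_{1 \le j \le m} \abs{L_j(\mathbf{t})}^2 \hat h(P).
\]
Since $r \le m \le N$ and $\cdodici(r, E)$ is decreasing in $r$, the prefactor is bounded by $\cdiciassette(N, m, E)$, as required.

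The main bookkeeping lies in the first step: because $\End(E)$ may be a proper order in $\Oo_K$ and the $g_j$'s only generate a submodule of finite index in $\Gamma$, the coefficients $\alpha_{ij}$ genuinely live in $K$ rather than in $\End(E)$. Extending the inequality of Lemma~\ref{lemma.good.generators} to these $K$-coefficients, by clearing the denominator $d$ and exploiting the quadratic homogeneity of $\hat h$ under $\End(E)$-multiplication, is the delicate point. Once this is verified, the remainder of the argument is a clean two-sided estimate in which Cauchy--Schwarz supplies the upper half and Lemma~\ref{lemma.good.generators} the lower half.
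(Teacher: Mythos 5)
Your proof is correct and follows essentially the same route as the paper: apply Lemma~\ref{lemma.good.generators} to the $\End(E)$-module generated by the $P_i$, express the point in the resulting system of generators, normalize the coefficient vectors to obtain the $L_j$, and sandwich $\hat h(t_1P_1+\dotsb+t_NP_N)$ between a Cauchy--Schwarz upper bound and the lemma's lower bound. The only differences are bookkeeping ones --- you normalize each $L_j$ by its own coefficient norm and keep the full sum $\sum_j A_j\hat h(g_j)$ in the lower bound, where the paper normalizes by a single maximal term $A$ --- which in fact yields the slightly sharper constant $m/\cdodici(m,E)$, still dominated by the stated $\cdiciassette(N,m,E)$.
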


\begin{proof}
The points $P_i$ lie in a finitely generated subgroup of $E$ of rank $m$.

We apply Lemma~\ref{lemma.good.generators} to this subgroup and we get elements $g_1,\dotsc,g_m\in E$ as in the lemma, which generate an $\End(E)$- submodule of finite index; after multiplying by a well chosen fixed integer $a$ we may also assume that the $P_i$ lie in this submodule of finite index. Then we have that
\begin{equation*}
aP_i= v_{i1}g_1\dotsb+ v_{im}g_m\quad\text{ for }i=1,\dotsc,N\text{ and some }v_{ij}\in \End(E);
\end{equation*}
moreover
\begin{equation}
\label{stellauno}
   \hat h(b_1g_1+\dotsb+b_mg_m)\geq \cdodici(m,E) \sum_{i=1}^{m}(|b_i|^2\hat h(g_i))\quad \forall\mathbf{b}\in \End(E)^m,
\end{equation}
 where $\cdodici(r,E)=\frac{2^{2r-2}}{r^2(2r)!^2\abs{D_K}^{r}}$ is the constant in Lemma~\ref{lemma.good.generators}.

Let $A=\max_{i,j}\{|v_{ij}|^2\hat h(g_j)\}$ and define
\begin{align*}
\tilde{L}_j&=v_{1j}X_1+\dotsb+v_{Nj}X_N &j&=1,\dotsc,m\\
L_j&=\left(\frac{\hat h(g_j)}{NA}\right)^\frac{1}{2}\tilde{L}_j &j&=1,\dotsc,m.
\end{align*}
Notice that we can assume $A>0$, otherwise the point $P$ would be a torsion point, and the thesis of the  lemma would be trivially true. Notice also that $\norm{L_j}\leq 1$.

With these definitions, for every $\mathbf{t}\in\rend^N$ we have that 
\begin{equation*}
a(t_1P_1+\dotsb+t_NP_N)=\sum_{i=1}^m \tilde{L}_j(\mathbf{t})g_j.
\end{equation*}
Therefore
\begin{align}
   \notag \abs{a}^2 \hat h(t_1P_1+\dotsb+t_NP_N)=\hat h\left(\sum_{j=1}^m \tilde{L}_j(\mathbf{t})g_j\right)\leq\sum_{j=1}^m|\tilde{L}_j(\mathbf{t})|^2 \hat h(g_j)=\\
\label{ineqlemma}= NA\sum_{j=1}^m|{L}_j(\mathbf{t})|^2\leq NmA\max_{1\leq j\leq m}\{|{L}_j(\mathbf{t})|^2\}.
\end{align}

If $i_0,j_0$ are the indices for which the maximum is attained in the definition of $A$, then by \eqref{stellauno} we obtain 
\begin{equation*}
   \cdodici(m,E)A=\cdodici(m,E)|v_{i_0 j_0}|^2\hat h(g_{j_0})\leq \hat h(aP_{i_0})\leq \hat h(aP)=|a|^2 \hat h(P).
\end{equation*}
Combining this with inequality \eqref{ineqlemma}, we get the thesis of the lemma.
\end{proof}

Now we prove  the crucial estimate for our application. The central idea is the application of Minkowski's Convex body Theorem: we construct a convex body $\mathcal {S}_T$  and we use Minkowski's theorem to find a non trivial element $u$ of the lattice $\rend^N$ in $\mathcal {S}_T$. This element defines an algebraic subgroup $H$; the bound on the norm of $u$ will ensure that the degree of  $H$ is $\lessapprox T$ (the extra term in the statement appears only for ease of computation and does not play any special role), while the bound on $\abs{L_i(u)}$ will imply that the height of $H+P$  is bounded by $\lessapprox k\hat{h}(P)/T^{\frac{1}{N-1}}$. The parameters $T$ and $k$ will be chosen later suitably in the proof of the main theorem, in order to get the bound on $\hat{h}(P)$.

\begin{lem}\label{MinkCM}
   Let $L_i\in\C[X_1,\dotsc,X_N], i=1,\dotsc, N-1$ be $N-1$ independent linear form. With the notations of Section \ref{CM} let $T\geq 1$ and $\kappa\geq\left(\frac{2f\abs{D_K}^{1/2}}{\pi}\right)^\frac{N}{2(N-1)}$ be two real numbers. Then there exist $u\in\rend^N \setminus\{0\}$ such that 
  \begin{align*}
     \norm{u}^2&\leq T^2+\frac{\kappa^2}{T^\frac{2}{N-1}}\sum_{i=1}^{N-1}\norm{L_i}^2\\
     \abs{L_i(u)}&\leq \frac{\kappa \norm{L_i}}{T^\frac{1}{N-1}},
  \end{align*}
  where    $||{u}||$ denotes the euclidean norm of $u$, $||{L}||$ the euclidean norm of the vector of the coefficients of $L$ and $|L(u)|$ is the absolute value of $L(u)$.
\end{lem}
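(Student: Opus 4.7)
The strategy is to apply Minkowski's First (convex body) Theorem to the lattice $\rend^N\subset\C^N\cong\R^{2N}$. By Section~\ref{CM}, $\rend=\Z[\tau]$ has covolume $f|D_K|^{1/2}/2$ in $\C$, hence $\rend^N$ has covolume $(f|D_K|^{1/2}/2)^N$ in $\R^{2N}$.

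Since the $L_i$ are $N-1$ linearly independent forms, the intersection $\bigcap_{i=1}^{N-1}\ker L_i$ is a complex line; let $v_0$ be a unit vector spanning it, so that each $u\in\C^N$ decomposes orthogonally as $u=\scal{u}{v_0}v_0+u'$ with $u'\in v_0^\perp$. With $\rho_i=\kappa\norm{L_i}/T^{1/(N-1)}$, I would apply Minkowski's theorem to the centrally symmetric convex body
\[
\mathcal{S}=\left\{u\in\C^N:\,|\scal{u}{v_0}|\leq T\text{ and }|L_i(u)|\leq\rho_i\text{ for }i=1,\dotsc,N-1\right\}.
\]
The constraints $|L_i(u)|\leq\rho_i$ are precisely the second conclusion of the lemma.

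For the volume, I would change variables through the $\C$-linear isomorphism $\Phi(u)=(\scal{u}{v_0},L_1(u)/\norm{L_1},\dotsc,L_{N-1}(u)/\norm{L_{N-1}})$. Writing its matrix in an orthonormal basis $v_0,e_1,\dotsc,e_{N-1}$ of $\C^N$ and setting $M=(L_i(e_j))_{ij}$, the complex determinant equals $\det M/\prod\norm{L_i}$, and Hadamard's inequality applied to the Gram matrix $MM^\dagger$ gives $|\det M|^2\leq\prod\norm{L_i}^2$, so $|\det_\C\Phi|\leq 1$. Since $\Phi$ sends $\mathcal{S}$ onto the polydisc $\{|z_0|\leq T,\,|z_i|\leq\kappa/T^{1/(N-1)}\}$ of real volume $\pi^N\kappa^{2(N-1)}$, we obtain $\mathrm{Vol}(\mathcal{S})\geq\pi^N\kappa^{2(N-1)}$. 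Minkowski's requirement $\mathrm{Vol}(\mathcal{S})\geq 2^{2N}(f|D_K|^{1/2}/2)^N=2^Nf^N|D_K|^{N/2}$ then rearranges into exactly the hypothesis $\kappa\geq(2f|D_K|^{1/2}/\pi)^{N/(2(N-1))}$, producing a nonzero lattice point $u\in\rend^N\cap\mathcal{S}$.

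It remains to derive the first inequality. Writing $u=\alpha v_0+u'$ with $|\alpha|=|\scal{u}{v_0}|\leq T$, orthogonality gives $\norm{u}^2=|\alpha|^2+\norm{u'}^2\leq T^2+\norm{u'}^2$, so it suffices to show $\norm{u'}^2\leq\sum_i\rho_i^2=\frac{\kappa^2}{T^{2/(N-1)}}\sum_i\norm{L_i}^2$. Setting $z_i=L_i(u')$ (so $|z_i|\leq\rho_i$) we have $u'=M^{-1}z$; I would write $M^{-1}=M^*/\det M$, use Lemma~\ref{lemmaadjugate} to express each row of the adjugate $M^*$ as a $(N-2)\times(N-1)$ subdeterminant of the $L_i$-rows, and apply Hadamard's inequality to those subdeterminants. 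The factors $|\det M|^2$ in the denominator cancel against the product $\prod\norm{L_i}^2$ arising from Hadamard in the numerator, collapsing the estimate to the claimed $\sum\norm{L_i}^2$ bound. This adjugate-and-Hadamard step is the main technical hurdle; the Minkowski volume computation above is essentially routine once Hadamard is applied to the Jacobian of $\Phi$.
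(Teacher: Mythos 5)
Your overall strategy (Minkowski's convex body theorem for the lattice $\rend^N\subset\C^N\cong\R^{2N}$, a linear change of variables, and Hadamard's inequality to control the Jacobian) is the same as the paper's, and your volume lower bound $\mathrm{Vol}(\mathcal{S})\geq\pi^N\kappa^{2(N-1)}$ for your body is correct. The genuine gap is in the choice of the convex body: membership in your $\mathcal{S}$ does not imply the first conclusion of the lemma, and the ``adjugate-and-Hadamard'' step you defer cannot close this. Writing $u'=M^{-1}z$ with $\abs{z_i}\leq\rho_i$ (where $\rho_i=\kappa\norm{L_i}/T^{1/(N-1)}$), you need to bound $\norm{M^{-1}z}$ from above, which requires a \emph{lower} bound on $\abs{\det M}$; Hadamard only gives the upper bound $\abs{\det M}\leq\prod_i\norm{L_i}$, and no useful lower bound exists since the forms may be nearly proportional or have small norms. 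Concretely, for $N=2$ and $L_1=\epsilon X_1$ your body is $\{\abs{u_2}\leq T,\ \abs{u_1}\leq\kappa/T\}$, which contains points with $\norm{u}^2=T^2+\kappa^2/T^2$, while the lemma demands $\norm{u}^2\leq T^2+\kappa^2\epsilon^2/T^2$; even for mutually orthogonal forms, any $\norm{L_i}<1$ already breaks the claimed bound $\norm{u'}^2\leq\sum_i\rho_i^2$ at the corners of the polydisc. Since Minkowski only produces \emph{some} nonzero lattice point of $\mathcal{S}$, and not every point of $\mathcal{S}$ satisfies the first inequality, the argument does not go through as written.

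The paper sidesteps this by defining the convex body directly as the intersection of the ball $\norm{z}^2\leq T^2+\frac{\kappa^2}{T^{2/(N-1)}}\sum_i\norm{L_i}^2$ with the $N-1$ cylinders $\abs{L_i(z)}\leq\rho_i$, so that \emph{both} conclusions hold for every point of the body by construction; all the work then goes into the volume lower bound, which is done by Fubini (integrating the area of the two-dimensional disk fibres over the polydisc of $L$-values, with Hadamard controlling the Jacobian). If you replace your $\mathcal{S}$ by this intersection, the first inequality becomes automatic, and your cleaner Jacobian computation via the map $\Phi$ can still serve in the resulting volume estimate.
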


\begin{proof}
      Let $\mathcal {S}_T\subseteq \C^{N}$ be the set of points $(z_1,\dotsc,z_N)$ satisfying the inequalities
 \begin{align*}
    z_1^2+\dotsb + z_N^2&\leq T^2 +\frac{\kappa^2}{T^\frac{2}{N-1}}\sum_{j=1}^{N-1}\norm{L_j}^2\\
    \abs{L_j(z_1\dotsc,z_N)}&\leq \kappa||L_j||/T^\frac{1}{N-1}.
 \end{align*}
   We identify $\C^N$ and $\R^{2N}$ with coordinates $x_j,y_j$, where we write $z_j=x_j+iy_j$ for $j=1,\dotsc,N$.
   As remarked in Section \ref{CM}, the ring $\rend=\Z[\tau]\subseteq \C$, after this identification, has a 2-dimensional volume equal to $f\frac{\abs{D_K}^\frac{1}{2}}{2}$.
   Let also $\mathcal {S}'_T\subseteq \R^{2N}$ correspond to $\mathcal {S}_T$.

 Geometrically, $\mathcal {S}'_T$ is the intersection between a ball and $N-1$ cylinders with $2$-dimensional basis and $2N-2$-dimensional axis.
 
 The statement of the theorem is equivalent to say that $\mathcal {S}_T\cap \rend^N\neq \{0\}$.
 $\mathcal {S}'_T$ is clearly convex and symmetric with respect to the origin, so by Minkowski's Convex Body Theorem if the set $\mathcal {S}'_T$ has a volume bigger than $\left(f\frac{\abs{D_K}^{1/2}}{2}\right)^N 2^{2N}=\left(2f\abs{D_K}^{1/2}\right)^N$, then the intersection  $\mathcal {S}_T\cap \rend^N$ contains points other than the origin.

 Let $L_N(X_1,\dotsc,X_N)\in\C[X_1,\dotsc,X_N]$ be a linear form orthogonal to the $L_j$ and with $\norm{{L_N}}=1$.
 Define $\tilde{x}_j=\Re L_j(z_1,\dotsc,z_N)$ and $\tilde{y}_j=\Im L_j(z_1,\dotsc,z_N)$ for $j=1,\dotsc,N$ . Let $A$ be the matrix of the coordinate change from the $x_j,y_j$ to the $\tilde{x}_j,\tilde{y}_j$.
 
The volume of $\mathcal {S}'_T$ is given by the integral
\[
  \iiint_{\mathcal {S}'_T} 1 \cdot d  x_1 d  y_1\dots d x_{N}d  y_{N}.
\]
In order to compute this integral we perform a change of variable and use the variables $ \tilde x_1, \tilde y_1\cdots,  \tilde x_{N}, \tilde y_{N}$. Using the Fubini-Tonelli theorem, we integrate with respect to $\tilde x_1, \tilde y_1,\dotsc,  \tilde x_{N-1}, \tilde y_{N-1}$ the area of the 2-dimensional circle obtained by intersecting $\mathcal {S}'_T$ with the plane on which the $\tilde x_1, \tilde y_1,\dotsc, \tilde x_{N-1}, \tilde y_{N-1}$ are constant.

After the change of variable, the volume of $\mathcal {S}'_T$ is given  by the integral
\[
\det A \iiint \pi \left(T^2 +\frac{\kappa^2}{T^\frac{2}{N-1}}\sum_{i=1}^{N-1}\norm{L_i}^2-\norm{A(\tilde x_1,\tilde y_1,\dotsc,\tilde x_{N-1},\tilde y_{N-1},0,0)}^2\right) d \tilde x_1 d \tilde y_1\dots d \tilde x_{N-1}d \tilde y_{N-1},
\]
where each pair of variables $\tilde{x}_j,\tilde{y}_j$ for $j=1,\dotsc,N-1$ is jointly integrated over the disk of radius $\frac{\kappa\norm{L_j}}{T^\frac{1}{N-1}}$.
By Hadamard's inequality $\det A\geq (\prod_{j=1}^{N-1}\norm{L_j}^2)^{-1}$ and by the triangular inequality
\[\norm{A(\tilde x_1,\tilde y_1,\dotsc,\tilde x_{N-1},\tilde y_{N-1},0,0)}^2\leq\frac{\kappa^2}{T^\frac{2}{N-1}}\sum_{j=1}^{N-1}\norm{L_j}^2\]
on the integration domain.

Therefore the whole volume can be strictly bounded below as 
\[
 \det A\cdot  \pi T^2\cdot \prod_{j=1}^{N-1}\left(\pi\frac{\kappa^2\norm{L_j}^2}{T^\frac{2}{N-1}} \right)\geq \pi^{N}\kappa^{2(N-1)}
\]
and the hypothesis of Minkowski's theorem are satisfied as soon as
\[
\pi^{N}\kappa^{2(N-1)}\geq (2f\abs{D_K}^{1/2})^N.\qedhere
\]
\end{proof}

\section{The proof of the main Theorem}

We proceed now to the proof of the main theorem of the paper, following the ideas sketched in the Introduction. First  we apply the content of~Section \ref{section:computations} to construct in Theorem~\ref{AusiliareCM} an auxiliary translate with controlled height and degree.
\begin{thm}\label{AusiliareCM}
   Let $E$ be an elliptic curve with CM by $\tau$ and let $K=\Q(\tau)$  with discriminant $D_K$; let $f$ be the conductor of $\End(E)=\Z+\Z\tau$.

   Let $P=(P_1,\dotsc,P_N)\in B\subset  E^N$, where $B$ is a torsion variety of dimension $N-1$.
Let $T\geq 1$ and $\kappa\geq\left(\frac{2f\abs{D_K}^{1/2}}{\pi}\right)^\frac{N}{2(N-1)}$ be real numbers.

Then there exists an abelian subvariety $H\subset  E^N$ of codimension $1$ such that 
\begin{align*}
   \deg(H+P)\leq &  3^{N-1}(N-1)!\left(T+\frac{(N-1)\kappa^2}{T^{\frac{1}{N-1}}}\right)\\
   h_2(H+P)\leq & \cuno(N,E)\frac{\kappa^2}{T^{\frac{1}{N-1}}}\hat{h}(P)+\cdue(N,E)\left(T+\frac{(N-1)\kappa^2}{T^{\frac{1}{N-1}}}\right)
\end{align*}
where
\begin{align*}\cuno(N,E)&=3^{N-1} (N)!\cdiciassette(N,N-1,E)=\frac{N(N-1)3^{N-1} (N)!}{\cdodici(N-1,E)}=\\&=\frac{N(N-1)^3 3^{N-1}(2N-2)!^2 (N)!\abs{D_K}^{N-1}}{2^{2N-4}},\\
\cdue(N,E)&=N3^{N-1}N!C(E)\end{align*}
and $C(E)$ is defined in Proposition \ref{confrontoaltezze}. 
\end{thm}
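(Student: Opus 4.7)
The plan is to combine three tools from the preceding sections: Lemma \ref{LemmaHabCM}, which linearises heights on $B$; Lemma \ref{MinkCM}, which provides a small integral vector via Minkowski's theorem; and Proposition \ref{boundsotto}, which converts such a vector into a translate $H+P$ with controlled degree and height.

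Since $P\in B$ with $\dim B=N-1$, I first invoke Lemma \ref{LemmaHabCM} with $m=N-1$ to obtain linear forms $L_1,\ldots,L_{N-1}\in\C[X_1,\ldots,X_N]$ with $\norm{L_j}\le 1$ satisfying
\[
\hat h(t_1P_1+\cdots+t_NP_N)\le \cdiciassette(N,N-1,E)\max_{1\le j\le N-1}|L_j(\mathbf{t})|^2\,\hat h(P)\qquad\forall\,\mathbf{t}\in\End(E)^N.
\]
I then apply Lemma \ref{MinkCM} to these forms, but with parameter $\sqrt{T}$ substituted for $T$. This substitution is legitimate: $T\ge 1$ implies $\sqrt{T}\ge 1$, and the assumption on $\kappa$ is unchanged. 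Using $\norm{L_j}\le 1$, the lemma produces a non-zero $u\in\End(E)^N$ with
\[
\norm{u}^2\le T+\frac{(N-1)\kappa^2}{T^{1/(N-1)}},\qquad \max_j|L_j(u)|^2\le \frac{\kappa^2}{T^{1/(N-1)}}.
\]

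I now let $H$ be the identity component of the algebraic subgroup of $E^N$ defined by $u_1X_1+\cdots+u_NX_N=O$; since $u\neq 0$, this is an abelian subvariety of codimension one. Proposition \ref{boundsotto} gives
\[
\deg(H+P)\le 3^{N-1}(N-1)!\,\norm{u}^2\qquad\text{and}\qquad h_2(H+P)\le 3^{N-1}N!\bigl(\hat h(u(P))+NC(E)\norm{u}^2\bigr).
\]
Inserting the bound on $\norm{u}^2$ into the first inequality immediately yields the claimed degree estimate. For the height, I reapply the inequality from Lemma \ref{LemmaHabCM} with $\mathbf{t}=u$, obtaining $\hat h(u(P))\le \cdiciassette(N,N-1,E)\,\kappa^2\,T^{-1/(N-1)}\,\hat h(P)$; substituting this and the bound on $\norm{u}^2$ into the displayed height estimate, the constants collapse precisely into $\cuno(N,E)=3^{N-1}N!\,\cdiciassette(N,N-1,E)$ and $\cdue(N,E)=N\,3^{N-1}N!\,C(E)$.

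The only genuinely non-routine point is the reparametrisation $T\mapsto\sqrt{T}$ in Lemma \ref{MinkCM}: this is what converts the $T^2$ appearing naturally in Minkowski's theorem into the linear $T$ of the present statement. Beyond that, the proof is a direct assembly of the three ingredients; the main care lies in verifying that the arithmetic of the constants works out exactly as stated, and that $u$ does define a codimension-one subvariety.
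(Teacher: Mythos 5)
Your proposal is correct and follows essentially the same route as the paper's own proof: Lemma \ref{LemmaHabCM} with $m=N-1$, then Lemma \ref{MinkCM} applied with $\sqrt{T}$ in place of $T$ (exactly the reparametrisation the paper uses), and finally Proposition \ref{boundsotto} to convert the resulting vector $u$ into the bounds on $\deg(H+P)$ and $h_2(H+P)$, with the constants assembling into $\cuno(N,E)=3^{N-1}N!\,\cdiciassette(N,N-1,E)$ and $\cdue(N,E)=N3^{N-1}N!\,C(E)$ as stated.
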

\begin{proof}
   Let us apply Lemma \ref{LemmaHabCM} to the point $P$ with $m=N-1$. We obtain linear forms $L_1,\dotsc,L_{N-1}\in\C[X_1,\dotsc,X_N]$ with $\norm{L_i}\leq 1$.
   We apply now Lemma \ref{MinkCM} to these linear forms, taking the same $\kappa$ of the statement and a $T$ equal to the square root of the $T$ in the statement. We obtain a non-zero vector $(a_1,\dotsc,a_N)=u\in\End(E)^N$ such that
   \begin{align}
      \label{aux.eqn1}||{u}||^2&\leq T+\frac{\kappa^2(N-1)}{T^\frac{1}{N-1}}\\
      \label{aux.eqn2}\abs{L_i(u)}&\leq \frac{\kappa }{T^\frac{1}{2(N-1)}}.
  \end{align}
  By Lemma \ref{LemmaHabCM}, the bound \eqref{aux.eqn2} implies that 
  \begin{equation}\label{aux.eqn3}
  \hat h(a_1 P_1 +\dotsb a_N P_N)\leq \cdiciassette(N,N-1,E)\frac{\kappa^2\hat h (P)}{T^\frac{1}{(N-1)}}.
   \end{equation}
  
  Let $H$ be the component containing the identity of the algebraic subgroup defined by $a_1 X_1+\dotsb + a_N X_N=0$.
  The thesis now follows from Proposition~\ref{boundsotto}; more precisely
  \begin{align*}
     \deg (H+P)\leq 3^{N-1} (N-1)! \norm{u}^2\leq 3^{N-1}(N-1)!\left(T+\frac{(N-1)\kappa^2}{T^{\frac{1}{N-1}}}\right)
  \end{align*}
from \eqref{aux.eqn1}, and 
  \begin{align*}
     h_2(H+P)&\leq  3^{N-1} N!\left(\hat h(u(P))+N C(E)\norm{u}^2 \right)\leq\\
     &\leq 3^{N-1} N!\left(  \cdiciassette(N,N-1,E)\frac{\kappa^2\hat h (P)}{T^\frac{1}{(N-1)}}+   N C(E)\left( T+\frac{\kappa^2(N-1)}{T^\frac{1}{N-1}}   \right)   \right)
  \end{align*}
from \eqref{aux.eqn1} and \eqref{aux.eqn3}.
\end{proof}
We prove here the theorem with the sharpest constants that come from the method; the theorem as stated in the introduction follows by observing that $D_1\leq C_1+C_3$, $D_2\leq C_2$ and $\Dtre\leq C_4$.

The strategy is the following: the transversality of $\Ci$ ensures that  $P$ is a component of $\Ci \cap (H+P)$; we then apply the Arithmetic B\'ezout Theorem to $\Ci \cap (H+P)$; our sharp bounds for  height and degree  of $H+P$ and a good choice of the parameters give the desired bound for $h(P)$.
\begin{thm}\label{MAINTCM}
Let $E$ be an elliptic curve with CM. Let $\Ci$ be an irreducible transverse curve in $E^N$.
Then every point $P$ on $\Ci$ of rank  $\le N-1$  has height bounded as:
   \begin{equation*}
      h_2(P)\leq \Cuno(N,E)\cdot h_2(\Ci)(\deg\Ci)^{N-1} +\Cdue(N,E)(\deg\Ci)^N +\Cquattro(N,E)h_2(\Ci)+\Ctre(N,E),
\end{equation*}
where
\begin{align*}
   \Cuno(N,E)&=N!\left(\frac{N}{N-1}\right)^{N-1}3^{N-1}\cuno(N,E)^{N-1}\left(\frac{2f\abs{D_K}^{1/2}}{\pi}\right)^N,\\
   \Cdue(N,E)&=N!\left(\frac{N}{N-1}\right)^{N-1}3^{N-1}\cuno(N,E)^{N-1}\left(\frac{2f\abs{D_K}^{1/2}}{\pi}\right)^N\left(N^2 \cquattro(E)+\csei(1,N-1,3^{N}-1)\right),\\
   \Cquattro(N,E)&=\frac{4^{N-1}(2N-1)^2}{(N-1)(2N)!^2\abs{D_K}^{N-1}},\\
   \Ctre(N,E)&=\frac{4^{N-1}(2N-1)^2}{(N-1)(2N)!^2\abs{D_K}^{N-1}}\left(N^2\cquattro(E)+\csei(1,N-1,3^N-1)\right)+N\cquattro(E),
 \end{align*}
 where $\cuno(N,E)$ is defined in Proposition \ref{AusiliareCM}, $\cquattro(E)$ in Proposition \ref{confrontoaltezze} and $\csei(1,N-1,3^{N}-1)$ in Theorem \ref{AriBez}.
\end{thm}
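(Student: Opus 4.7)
The plan is to combine the auxiliary translate constructed in Theorem~\ref{AusiliareCM} with the Arithmetic B\'ezout Theorem applied to the 0\nobreakdash-dimensional intersection $\Ci \cap (H+P)$. Since $P$ has rank at most $N-1$, it lies in some algebraic subgroup of dimension $\le N-1$, and hence in a torsion variety $B$ of dimension exactly $N-1$. Applying Theorem~\ref{AusiliareCM} to $P\in B$ produces a codimension~$1$ abelian subvariety $H\subset E^N$ (so certainly $P\in H+P$) satisfying the explicit bounds on $\deg(H+P)$ and $h_2(H+P)$ stated there, in terms of the real parameters $T\ge 1$ and $\kappa$.

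Because $\Ci$ is transverse, it is not contained in the proper translate $H+P$, so the intersection $\Ci\cap (H+P)$ has dimension $0$, and $P$ appears as one of its components. The Arithmetic B\'ezout Theorem~\ref{AriBez} with $X=\Ci$ and $Y=H+P$ then yields
\begin{equation*}
   h_2(P)\le \deg(\Ci)\, h_2(H+P)+\deg(H+P)\,h_2(\Ci)+\csei(1,N-1,3^N-1)\deg(\Ci)\deg(H+P).
\end{equation*}
Substituting the bounds from Theorem~\ref{AusiliareCM} and replacing $\hat h(P)$ by $h_2(P)+NC(E)$ via Proposition~\ref{confrontoaltezze}, the coefficient of $h_2(P)$ on the right-hand side becomes $\deg(\Ci)\cuno(N,E)\kappa^2/T^{1/(N-1)}$; all other terms split as a piece proportional to $T$ (contributing to the $(\deg\Ci)^{N-1}h_2(\Ci)$ and $(\deg\Ci)^{N}$ terms) and a piece proportional to $(N-1)\kappa^2/T^{1/(N-1)}$ (contributing to the lower-order $h_2(\Ci)$ and constant terms).

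The parameters are now chosen to make this substitution sharp. Take $\kappa$ equal to the smallest value allowed by Theorem~\ref{AusiliareCM}, namely $\kappa^2=\bigl(\frac{2f\abs{D_K}^{1/2}}{\pi}\bigr)^{N/(N-1)}$, and choose
\begin{equation*}
   T^{1/(N-1)}=\frac{N}{N-1}\deg(\Ci)\,\cuno(N,E)\,\kappa^2,
\end{equation*}
so that the coefficient of $h_2(P)$ becomes $\alpha:=(N-1)/N<1$. Absorbing $\alpha h_2(P)$ to the left and dividing by $1-\alpha=1/N$ gives a bound of the desired shape $\Cuno h_2(\Ci)(\deg\Ci)^{N-1}+\Cdue(\deg\Ci)^{N}+\Cquattro h_2(\Ci)+\Ctre$.

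The main obstacle is purely bookkeeping: one must verify that this choice is the optimizer. Precisely, the coefficient of $h_2(\Ci)(\deg\Ci)^{N-1}$ is proportional to $\frac{1}{(1-\alpha)\alpha^{N-1}}$, and elementary calculus shows that this quantity is minimized at $\alpha=(N-1)/N$, where it equals $N(N/(N-1))^{N-1}$; this is exactly the combinatorial factor appearing in $\Cuno(N,E)$. The secondary piece $(N-1)\kappa^2/T^{1/(N-1)}$ of $\deg(H+P)$ is responsible for the $\Cquattro(N,E)h_2(\Ci)$ and (together with $NC(E)/2$ from the replacement of $\hat h$) the additive constant $\Ctre(N,E)$. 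Plugging in the explicit value of $\cuno(N,E)$ from Theorem~\ref{AusiliareCM} and simplifying yields the stated formulae for the $C_i$'s.
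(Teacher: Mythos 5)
Your proposal follows exactly the paper's own proof: apply Theorem~\ref{AusiliareCM} to obtain the auxiliary translate $H+P$, use transversality to ensure $P$ is a component of $\Ci\cap(H+P)$, apply the Arithmetic B\'ezout Theorem~\ref{AriBez} together with Proposition~\ref{confrontoaltezze}, and optimize with the same choices $\kappa^{2}=\bigl(\tfrac{2f\abs{D_K}^{1/2}}{\pi}\bigr)^{N/(N-1)}$ and $T^{1/(N-1)}=\tfrac{N}{N-1}\deg(\Ci)\,\cuno(N,E)\,\kappa^{2}$, so the argument is correct and essentially identical (your remark that $\alpha=(N-1)/N$ minimizes $\tfrac{1}{(1-\alpha)\alpha^{N-1}}$ nicely justifies the parameter choice the paper makes without comment). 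The only quibble is a detail of the final bookkeeping: the replacement $\hat h(P)\le h_2(P)+N\cquattro(E)$ feeds $N\cquattro(E)$ (not $N\cquattro(E)/2$) into the term proportional to $\kappa^{2}/T^{1/(N-1)}$, but this does not affect the structure or validity of the argument.
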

\begin{proof}
 If $P$ has rank zero then its height is zero and the statement is true.

 Let $\kappa=\left(\frac{2f\abs{D_K}^{1/2}}{\pi}\right)^\frac{N}{2(N-1)}$, and let $T\geq 1$ be real numbers whose value will be chosen later.
 We apply Proposition~\ref{AusiliareCM} to the point $P$ of rank one, thus obtaining a subvariety with
\begin{align}\label{bounds}
   \deg(H+P)\leq &  3^{N-1}(N-1)!\left(T+\frac{(N-1)\kappa^2}{T^{\frac{1}{N-1}}}\right)\\
   h_2(H+P)\leq & \cuno(N,E)\frac{\kappa^2}{T^{\frac{1}{N-1}}}\hat{h}(P)+\cdue(N,E)\left(T+\frac{(N-1)\kappa^2}{T^{\frac{1}{N-1}}}\right)
\end{align}

We now want to bound $\hat h(P)$  in terms of $\deg (H+P)$ and $h_2(H+P)$.

Notice that the point $P$ is a component of the intersection $\Ci\cap (H+P)$, because otherwise $\Ci\subseteq H+P$,
contradicting the fact that $\Ci$ is transverse. Therefore we can apply the Arithmetic B\'ezout Theorem \ref{AriBez} to the intersection
$\Ci\cap (H+P)$, obtaining:
\begin{equation*}\label{boundh2}
h_2(P)\leq h_2(\Ci)\deg H+ h_2(H+P)\deg \Ci+\csei(1,N-1,3^N-1)\deg H\deg \Ci.
\end{equation*}

By Proposition \ref{confrontoaltezze} we have $\hat h(P)\leq h_2(P)+N\cquattro(E)$
 so, using the bounds in formula \eqref{bounds}, we get
\begin{align}\label{eq_ottim}
 h_2(P)\leq \frac{\csette \kappa^2}{T^{\frac{1}{N-1}}}h_2(P)+\cotto T+\frac{\cnove \kappa^2}{T^{\frac{1}{N-1}}}
\end{align}

\bigskip

with
\begin{align*}
 \csette(\Ci,N,E)&=\cuno(N,E)\deg\Ci,\\
 \cotto(\Ci,N,E)&=3^{N-1}(N-1)!h_2(\Ci)+\cdue(N,E)\deg\Ci+\csei(1,N-1,3^N-1) 3^{N-1}(N-1)!\deg\Ci, \\
 \cnove(\Ci,N,E)&=(N-1)\cotto(\Ci,N,E)+N\cquattro(E)\csette(\Ci,N,E),
\end{align*}
We set
\begin{equation*}
   T=\left(\frac{N}{N-1}\csette\kappa^2\right)^{N-1},
\end{equation*}
so that
the coefficient in front of $h_2(P)$ on the right-hand side of \eqref{eq_ottim} becomes $\frac{N-1}{N}$. Bringing it to the other side we obtain
\begin{align}
   h_2(P)\leq N\left(\cotto T+\frac{\cnove \kappa^2}{T^{\frac{1}{N-1}}}\right)=N\left(\frac{N}{N-1}\right)^{N-1}\cotto\csette^{N-1}\kappa^{2(N-1)}+(N-1)\frac{\cnove}{\csette}.
\end{align}

The ratio $\frac{\cnove}{\csette}$ can be estimated as

\begin{multline*}
   \frac{\cnove}{\csette}\leq \frac{(N-1)\cotto}{\cuno(N,E)\deg\Ci}+N\cquattro(E)\leq\\
\leq\frac{N-1}{\cuno(N,E)}\left( 3^{N-1}(N-1)!h_2(\Ci)+\cdue(N,E)+\csei(1,N-1,3^N-1) 3^{N-1}(N-1)!\right)+N\cquattro(E).
\end{multline*}
So that after substituting the values of $\kappa,\csette,\cotto$ into \eqref{eq_ottim} obtain the thesis.
This can be readily checked by computing separately the coefficients of the terms in $h_2(\Ci)(\deg\Ci)^{N-1}$,$(\deg\Ci)^N$,$h_2(\Ci)$.
Indeed the coefficient of the term in $h_2(\Ci)(\deg\Ci)^{N-1}$ is 
\[N\left(\frac{N}{N-1}\right)^{N-1}3^{N-1}(N-1)!\cuno(N,E)^{N-1}\kappa^{2(N-1)}.\]
The coefficient of $(\deg\Ci)^{N}$ is 
\[N\left(\frac{N}{N-1}\right)^{N-1}\left(\cdue(N,E)+\csei(1,N-1,3^N-1) 3^{N-1}(N-1)!\right)\cuno(N,E)^{N-1}\kappa^{2(N-1)}.\]
The coefficient of $h_2(\Ci)$ is just 
\[\frac{(N-1)^2 3^{N-1}(N-1)!}{\cuno(N,E)}.\]
\end{proof}


\begin{remark}
The exponents $N$ and $N-1$ in the dependence from $\deg \Ci$ are related to the rank of the points, rather than to the dimension of the ambient space. If we restrict ourselves to points of rank $\leq r$ for an $r<N-1$ we can project the curve $\Ci$ to $E^{r+1}$ in a way that doesn't increase its height and degree, and apply our theorem to the image obtaining a bound with better exponents.
The whole procedure is described in detail in the proof of~\cite[Theorem 4.3]{EsMordell}.
\end{remark}

\section{Elliptic curves without Complex Multiplication}
The case of an elliptic curve $E$ without Complex Multiplication was  treated with a different method in \cite{ExpTAC}. The methods that we have presented so far in this paper allow us to improve drastically the constants, compared to those in \cite{ExpTAC}. We collect in this section the propositions that need to be modified. 

The following is a version over the reals of Lemma~\ref{MinkCM}.

\begin{lem}\label{MinknonCM}
   Let $L_i\in\R[X_1,\dotsc,X_N], i=1,\dotsc, N-1$ be $N-1$ independent linear form. With the notations of Section \ref{CM} let $T\geq 1$ and $\kappa\geq 2^\frac{N}{N-1}$ be two real numbers. Then there exist $u\in\Z^N \setminus\{0\}$ such that 
  \begin{align*}
     \norm{u}^2&\leq T^2+\frac{\kappa^2}{T^\frac{2}{N-1}}\sum_{i=1}^{N-1}\norm{L_i}^2\\
     \abs{L_i(u)}&\leq \frac{\kappa \norm{L_i}}{T^\frac{1}{N-1}},
  \end{align*}
  where    $||{u}||$ denotes the euclidean norm of $u$, $||{L}||$ the euclidean norm of the vector of the coefficients of $L$ and $|L(u)|$ is the absolute value of $L(u)$.
\end{lem}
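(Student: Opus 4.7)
My plan is to mirror the proof of Lemma~\ref{MinkCM} in the real setting, replacing the $K$-lattice $\End(E)^N \subset \C^N \simeq \R^{2N}$ by the integer lattice $\Z^N \subset \R^N$, which has covolume $1$ in the Lebesgue measure. This change both simplifies the geometry (real intervals replace complex disks) and eliminates the arithmetic factors $f|D_K|^{1/2}$ from the volume hypothesis.

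Concretely, I would define the convex symmetric body
\[\mathcal{S}_T = \Bigl\{x \in \R^N : \|x\|^2 \leq T^2 + \tfrac{\kappa^2}{T^{2/(N-1)}}\sum_{i=1}^{N-1}\|L_i\|^2,\ |L_i(x)| \leq \tfrac{\kappa\|L_i\|}{T^{1/(N-1)}}\Bigr\},\]
so that producing the required $u$ is equivalent to showing $\mathcal{S}_T \cap (\Z^N \setminus \{0\}) \neq \emptyset$; by Minkowski's First Convex Body Theorem this holds once $\mathrm{Vol}(\mathcal{S}_T) > 2^N$.

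To estimate the volume I would pick a unit linear form $L_N$ orthogonal to $L_1,\ldots,L_{N-1}$ and use $\tilde{x}_j = L_j(x)$ as new coordinates. Writing $A$ for the matrix of the inverse coordinate change, Hadamard's inequality on the coefficient matrix yields $|\det A| \geq \bigl(\prod_{j=1}^{N-1}\|L_j\|\bigr)^{-1}$. In the new coordinates the slab conditions decouple into independent box constraints $|\tilde{x}_j| \leq \kappa\|L_j\|/T^{1/(N-1)}$, and the orthogonality of $L_N$ to the other forms splits the Euclidean norm as $\|x\|^2 = \|x_\Pi\|^2 + \tilde{x}_N^2$ with $x_\Pi$ the projection onto $L_N^{-1}(0)$. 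Integrating by Fubini exactly as in the CM proof, the triangular-inequality step controls $\|x_\Pi\|^2 \leq \frac{\kappa^2}{T^{2/(N-1)}}\sum\|L_j\|^2$ on the integration region, so $\tilde{x}_N$ is free to range over $[-T,T]$; the product $\prod\|L_j\|$ coming from the volume of the slab box then cancels against $|\det A|^{-1}$, yielding $\mathrm{Vol}(\mathcal{S}_T) \geq 2^N \kappa^{N-1}$.

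The Minkowski hypothesis $2^N\kappa^{N-1} > 2^N$ is then implied, with considerable margin, by the assumption $\kappa \geq 2^{N/(N-1)}$. The main technical point, just as in the complex case, is the triangular-inequality bound on $\|x_\Pi\|^2$ over the slab region; this step carries over verbatim from Lemma~\ref{MinkCM} upon replacing complex moduli by real absolute values and complex disks by real intervals, and the remainder of the argument is a direct transcription over $\R$ of the computation carried out there over $\C$.
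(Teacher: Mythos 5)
Your proposal is correct and follows essentially the same route as the paper: the same convex body $\mathcal{S}_T$, Minkowski's first theorem against the threshold $2^N$ for $\Z^N$, the orthogonal unit form $L_N$, the change of coordinates with Hadamard's inequality, and the triangle-inequality bound that frees $\tilde{x}_N$ to range over $[-T,T]$. Your volume estimate $2^N\kappa^{N-1}$ is in fact slightly sharper than the paper's $\kappa^{N-1}$ (the paper drops the factors of $2$ from the interval lengths), but both comfortably satisfy Minkowski's hypothesis under the assumption $\kappa\geq 2^{N/(N-1)}$.
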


\begin{proof}
      Let $\mathcal {S}_T\subseteq \R^{N}$ be the set of points $(x_1,\dotsc,x_N)$ satisfying the inequalities
 \begin{align*}
    x_1^2+\dotsb + x_N^2&\leq T^2 +\frac{\kappa^2}{T^\frac{2}{N-1}}\sum_{j=1}^{N-1}\norm{L_j}^2\\
    \abs{L_j(x_1\dotsc,x_N)}&\leq \kappa||L_j||/T^\frac{1}{N-1}.
 \end{align*}

 The statement of the theorem is equivalent to say that $\mathcal {S}_T\cap \Z^N\neq \{0\}$.
 $\mathcal {S}_T$ is clearly convex and symmetric with respect to the origin, so by Minkowski's Convex Body Theorem if the set $\mathcal{S}_T$ has a volume bigger than $2^{N}$, then the intersection  $\mathcal{S}_T\cap \Z^N$ contains points other than the origin.

 Let $L_N(X_1,\dotsc,X_N)\in\R[X_1,\dotsc,X_N]$ be a linear form orthogonal to the $L_j$ and with $\norm{{L_N}}=1$.
 Define $\tilde{x}_j=L_j(x_1,\dotsc,x_N)$ for $j=1,\dotsc,N$ . Let $A$ be the matrix of the coordinate change from the $x_j$ to the $\tilde{x}_j$.
 
The volume of $\mathcal {S}_T$ is given by the integral
\[
  \iiint_{\mathcal {S}_T} 1 \cdot d  x_1 \dots d x_{N}.
\]
In order to compute this integral we perform a change of variable and use the variables $ \tilde x_1,\cdots,  \tilde x_{N}$. Using the Fubini-Tonelli theorem, we integrate with respect to $\tilde x_1,\dotsc,  \tilde x_{N-1}$ the length area of the segment obtained by intersecting $\mathcal {S}_T$ with the plane on which the $\tilde x_1,\dotsc, \tilde x_{N-1}$ are constant.

After the change of variable, the volume of $\mathcal {S}_T$ is given  by the integral
\[
\det A \iiint \left(T^2 +\frac{\kappa^2}{T^\frac{2}{N-1}}\sum_{i=1}^{N-1}\norm{L_i}^2-\norm{A(\tilde x_1,\dotsc,\tilde x_{N-1},0)}^2\right)^{1/2} d \tilde x_1 \dots d \tilde x_{N-1},
\]
where each variable $\tilde{x}_j$ for $j=1,\dotsc,N-1$ is integrated from $-\frac{\kappa\norm{L_j}}{T^\frac{1}{N-1}}$ to $\frac{\kappa\norm{L_j}}{T^\frac{1}{N-1}}$.
By Hadamard's inequality $\det A\geq (\prod_{j=1}^{N-1}\norm{L_j})^{-1}$ and by the triangular inequality
\[\norm{A(\tilde x_1,\dotsc,\tilde x_{N-1},0)}^2\leq\frac{\kappa^2}{T^\frac{2}{N-1}}\sum_{j=1}^{N-1}\norm{L_j}^2\]
on the integration domain.

Therefore the whole volume can be strictly bounded below as 
\[
 \det A\cdot  T\cdot \prod_{j=1}^{N-1}\left(\frac{\kappa\norm{L_j}}{T^\frac{1}{N-1}} \right)\geq \kappa^{N-1}
\]
and the hypothesis of Minkowski's theorem are satisfied as soon as
\[
\kappa^{N-1}\geq 2^N.\qedhere
\]
\end{proof}

The following is a version without Complex Multiplication of Theorem~\ref{AusiliareCM}

\begin{thm}\label{AusiliarenonCM}
   Let $E$ be an elliptic curve without CM.

   Let $P=(P_1,\dotsc,P_N)\in B\subset  E^N$, where $B$ is a torsion variety of dimension $N-1$.
Let $T\geq 1$ and $\kappa\geq 2^\frac{N}{N-1}$ be real numbers.

Then there exists an abelian subvariety $H\subset  E^N$ of codimension $1$ such that 
\begin{align*}
   \deg(H+P)\leq &  3^{N-1}(N-1)!\left(T+\frac{(N-1)\kappa^2}{T^{\frac{1}{N-1}}}\right)\\
   h_2(H+P)\leq & \cquindici(N)\frac{\kappa^2}{T^{\frac{1}{N-1}}}\hat{h}(P)+\cdue(N,E)\left(T+\frac{(N-1)\kappa^2}{T^{\frac{1}{N-1}}}\right)
\end{align*}
where
\[\cquindici(N)=\frac{N(N-1)^3 3^{N-1} N! (N-1)!^4}{4^{N-2}},\]
\[\cdue(N,E)=N3^{N-1}N!C(E)\]
and $C(E)$ is defined in Proposition \ref{confrontoaltezze}.
\end{thm}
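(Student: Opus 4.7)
The plan is to mirror the proof of Theorem~\ref{AusiliareCM} line by line, replacing the adelic tools built from $\End(E)$-lattices by their classical $\Z$-lattice analogues. First, in place of Lemma~\ref{LemmaHabCM} I would invoke its real counterpart (the non-CM version of \cite[Lemma~7.5]{ExpTAC}), obtained by running the argument of Lemma~\ref{lemma.good.generators} with the $\Z$-lattice generated by $P_1,\dotsc,P_N$ in $E(\overline\Q)$ equipped with the Euclidean structure coming from the N\'eron--Tate pairing, and using Minkowski's second theorem over $\R$ rather than over $\Oo_K$. Applied with $m=N-1$ this produces real linear forms $L_1,\dotsc,L_{N-1}\in\R[X_1,\dotsc,X_N]$ with $\norm{L_i}\leq 1$ such that
\[
\hat h(t_1P_1+\dotsb+t_NP_N)\leq \tilde c\max_{1\leq j\leq N-1}|L_j(\mathbf{t})|^2\,\hat h(P)\qquad\forall\,\mathbf{t}\in\Z^N,
\]
with $\tilde c=\frac{N(N-1)^3(N-1)!^4}{4^{N-2}}$, which is the real-lattice version of $\cdiciassette(N,N-1,E)$; the factor $|D_K|^{N-1}$ disappears and the exponent of the factorial halves because the lattice now has real rank $N-1$ rather than $2(N-1)$.

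Next I would apply Lemma~\ref{MinknonCM} to these $L_i$ with the same $\kappa$ from the hypothesis and with $T^{1/2}$ in place of $T$, obtaining a non-zero integer vector $u=(a_1,\dotsc,a_N)\in\Z^N$ with
\[
\norm{u}^2\leq T+\frac{(N-1)\kappa^2}{T^{1/(N-1)}},\qquad |L_i(u)|\leq\frac{\kappa}{T^{1/(2(N-1))}}.
\]
Substituting the second bound into the inequality from the first step gives
\[
\hat h(a_1P_1+\dotsb+a_NP_N)\leq \tilde c\,\frac{\kappa^2}{T^{1/(N-1)}}\,\hat h(P).
\]
Taking $H$ to be the identity component of the subgroup cut out by $a_1X_1+\dotsb+a_NX_N=O$ and applying Proposition~\ref{boundsotto}, the first assertion yields $\deg(H+P)\leq 3^{N-1}(N-1)!\norm{u}^2$, which is the required degree bound, and the second yields $h_2(H+P)\leq 3^{N-1}N!\bigl(\hat h(u(P))+NC(E)\norm{u}^2\bigr)$. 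Inserting the two previous bounds and checking that $3^{N-1}N!\,\tilde c=\cquindici(N)$ and $N\cdot 3^{N-1}N!\,C(E)=\cdue(N,E)$ gives the claimed inequality for $h_2(H+P)$.

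The whole argument is essentially bookkeeping once the real-lattice version of Lemma~\ref{LemmaHabCM} is in place, and that version is conceptually simpler than its CM analogue because the convex-body/simplex argument of Lemma~\ref{lemma.good.generators} does not need to be split into real and imaginary parts. The only mild obstacle is arithmetic: one must track the powers of $(N-1)!$ and of $4$ carefully through Minkowski's second theorem so that the constant $\tilde c$ comes out exactly as $\cquindici(N)/(3^{N-1}N!)$, which is what makes the final bound on $h_2(H+P)$ match the stated expression with $\cquindici(N)$ and $\cdue(N,E)$.
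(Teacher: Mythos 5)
Your argument is correct and coincides with the paper's own proof: the paper likewise applies the non-CM height-comparison lemma (cited directly as \cite[Lemma 7.5]{ExpTAC}, with the same constant $\cquattordici(N)=\frac{N(N-1)^3(N-1)!^4}{4^{N-2}}$), then Lemma~\ref{MinknonCM} with the same $\kappa$ and $T^{1/2}$, and finally Proposition~\ref{boundsotto} for the degree and height of $H+P$, with exactly the bookkeeping $3^{N-1}N!\,\cquattordici(N)=\cquindici(N)$ and $N\,3^{N-1}N!\,C(E)=\cdue(N,E)$ that you describe. The only cosmetic difference is that you sketch a re-derivation of the cited lemma over $\Z$-lattices, whereas the paper simply invokes it.
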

\begin{proof}
   Let us apply \cite[Lemma 7.5]{ExpTAC} to the point $P$ with $m=N-1$. We obtain linear forms $L_1,\dotsc,L_{N-1}\in\R[X_1,\dotsc,X_N]$ with $\norm{L_i}\leq 1$ and such that
   \begin{equation*}
\hat h(t_1P_1+\dotsb+t_NP_N)\leq \cquattordici(N) \max_{1\leq j\leq m}\{|L_j(\mathbf{t})|^2\}\hat h (P).
\end{equation*}
for all $\mathbf{t}\in\Z^N$, where
\[
\cquattordici(N)=\frac{N(N-1)^3(N-1)!^4}{4^{N-2}}.
\]

   We apply now Lemma \ref{MinknonCM} to these linear forms, taking the same $\kappa$ of the statement and a $T$ equal to the square root of the $T$ in the statement. We obtain a non-zero vector $(a_1,\dotsc,a_N)=u\in\Z^N$ such that
   \begin{align}
      \label{aux.eqn1.nonCM}||{u}||^2&\leq T+\frac{\kappa^2(N-1)}{T^\frac{1}{N-1}}\\
      \label{aux.eqn2.nonCM}\abs{L_i(u)}&\leq \frac{\kappa }{T^\frac{1}{2(N-1)}}.
  \end{align}
  Therefore by the definition of the $L_j$ the bound \eqref{aux.eqn2.nonCM} implies that 
  \begin{equation}\label{aux.eqn3.nonCM}
  \hat h(a_1 P_1 +\dotsb a_N P_N)\leq \cquattordici(N)\frac{\kappa^2\hat h (P)}{T^\frac{1}{(N-1)}}.
   \end{equation}
  
  Let $H$ be the component containing the identity of the algebraic subgroup defined by $a_1 X_1+\dotsb + a_N X_N=0$.
  The thesis now follows from Proposition~\ref{boundsotto}; more precisely
  \begin{align*}
     \deg (H+P)\leq 3^{N-1} (N-1)! \norm{u}^2\leq 3^{N-1}(N-1)!\left(T+\frac{(N-1)\kappa^2}{T^{\frac{1}{N-1}}}\right)
  \end{align*}
from \eqref{aux.eqn1.nonCM}, and 
  \begin{align*}
     h_2(H+P)&\leq  3^{N-1} N!\left(\hat h(u(P))+N C(E)\norm{u}^2 \right)\leq\\
     &\leq 3^{N-1} N!\left(  \cquattordici(N)\frac{\kappa^2\hat h (P)}{T^\frac{1}{(N-1)}}+   N C(E)\left( T+\frac{\kappa^2(N-1)}{T^\frac{1}{N-1}}   \right)   \right)
  \end{align*}
from \eqref{aux.eqn1.nonCM} and \eqref{aux.eqn3.nonCM}.
\end{proof}

With this version of Theorem~\ref{AusiliareCM} the proof of Theorem~\ref{MAINTCM} can be replicated closely. One needs only to take $\kappa=2^\frac{N}{N-1}$ instead of $\kappa=\left(\frac{2f\abs{D_K}^{1/2}}{\pi}\right)^\frac{N}{2(N-1)}$, and to replace $\cuno(N,E)$ from Theorem~\ref{AusiliareCM} by $\cquindici(N)$ from Theorem~\ref{AusiliarenonCM}. Carrying out the computations one obtains the following:

\begin{thm}\label{MAINTnonCM}
Let $E$ be an elliptic curve without CM. Let $\Ci$ be an irreducible transverse curve in $E^N$.
Then every point $P$ on $\Ci$ of rank  $\le N-1$  has height bounded as:
   \begin{equation*}
      h_2(P)\leq \Ccinque(N,E)\cdot h_2(\Ci)(\deg\Ci)^{N-1} +\Csei(N,E)(\deg\Ci)^N +\Csette(N,E)h_2(\Ci)+\Cotto(N,E),
\end{equation*}
where
\begin{align*}
   \Ccinque(N,E)&=N!\left(\frac{N}{N-1}\right)^{N-1}3^{N-1}4^N\cquindici(N)^{N-1},\\
   \Csei(N,E)&=N!\left(\frac{N}{N-1}\right)^{N-1}3^{N-1}4^N\cquindici(N)^{N-1}\left(N^2 \cquattro(E)+\csei(1,N-1,3^{N}-1)\right),\\
   \Csette(N,E)&=\frac{4^{N-1}}{(N-1)(N)!^2(N-1)!^2},\\
   \Cotto(N,E)&=\frac{4^{N-1}}{(N-1)(N)!^2(N-1)!^2}\left(N^2\cquattro(E)+\csei(1,N-1,3^N-1)\right)+N\cquattro(E),
 \end{align*}
 where $\cquindici(N)$ is defined in Proposition \ref{AusiliarenonCM}, $\cquattro(E)$ in Proposition \ref{confrontoaltezze} and $\csei(1,N-1,3^{N}-1)$ in Theorem \ref{AriBez}.
\end{thm}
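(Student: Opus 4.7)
The strategy mirrors the CM case (Theorem~\ref{MAINTCM}), swapping in the real Minkowski lemma (Lemma~\ref{MinknonCM}) and the non-CM auxiliary construction (Theorem~\ref{AusiliarenonCM}). First, if $P$ has rank zero then $\hat h(P)=0$ and $h_2(P)\leq N\cquattro(E)$ by Proposition~\ref{confrontoaltezze}, so the inequality holds. Otherwise, $P$ lies in a proper torsion variety $B$ of dimension $\leq N-1$, and after projecting if necessary we may assume $\dim B=N-1$ and apply Theorem~\ref{AusiliarenonCM}.

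The plan is to fix $\kappa=2^{N/(N-1)}$, leave $T\geq 1$ as a free parameter, and apply Theorem~\ref{AusiliarenonCM} to obtain an algebraic subvariety $H$ of codimension $1$ with
\begin{align*}
 \deg(H+P)&\leq 3^{N-1}(N-1)!\left(T+\tfrac{(N-1)\kappa^2}{T^{1/(N-1)}}\right),\\
 h_2(H+P)&\leq \cquindici(N)\tfrac{\kappa^2}{T^{1/(N-1)}}\hat h(P)+\cdue(N,E)\left(T+\tfrac{(N-1)\kappa^2}{T^{1/(N-1)}}\right).
\end{align*}
Because $\Ci$ is transverse and $H+P$ is a proper translate, $\Ci\not\subseteq H+P$, so $P$ appears as an isolated component of $\Ci\cap(H+P)$. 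The arithmetic B\'ezout theorem (Theorem~\ref{AriBez}) then yields
\[
h_2(P)\leq h_2(\Ci)\deg(H+P)+h_2(H+P)\deg\Ci+\csei(1,N-1,3^N-1)\deg(H+P)\deg\Ci,
\]
and Proposition~\ref{confrontoaltezze} lets us replace $\hat h(P)$ on the right by $h_2(P)+N\cquattro(E)$.

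Substituting the two displayed bounds yields an inequality of the shape
\[
 h_2(P)\leq \tfrac{\csette\kappa^2}{T^{1/(N-1)}}h_2(P)+\cotto\, T+\tfrac{\cnove\kappa^2}{T^{1/(N-1)}},
\]
with $\csette=\cquindici(N)\deg\Ci$, $\cotto$ a linear combination of $h_2(\Ci)$ and $\deg\Ci$ with coefficients of the form $3^{N-1}(N-1)!+\csei(\cdots)3^{N-1}(N-1)!+\cdue(N,E)$, and $\cnove=(N-1)\cotto+N\cquattro(E)\csette$, exactly as in the proof of Theorem~\ref{MAINTCM} but with $\cquindici$ in place of $\cuno$. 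The optimal choice is $T=\bigl(\tfrac{N}{N-1}\csette\kappa^2\bigr)^{N-1}$, which absorbs the $h_2(P)$ term into a factor $\tfrac{N-1}{N}$ and gives
\[
 h_2(P)\leq N\left(\tfrac{N}{N-1}\right)^{N-1}\cotto\,\csette^{N-1}\kappa^{2(N-1)}+(N-1)\tfrac{\cnove}{\csette}.
\]

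The remaining task is the routine but careful bookkeeping of constants. One expands $\cotto$ and $\cnove/\csette$ explicitly, collects the coefficients of $h_2(\Ci)(\deg\Ci)^{N-1}$, $(\deg\Ci)^N$, $h_2(\Ci)$ and of the absolute term, and checks that they match $\Ccinque,\Csei,\Csette,\Cotto$ respectively, using $\kappa^{2(N-1)}=2^{2N}=4^N$ in the non-CM case. The main place where one must be careful is the estimate $\cnove/\csette\leq (N-1)\cotto/(\cquindici(N)\deg\Ci)+N\cquattro(E)$, which produces the $h_2(\Ci)$ and absolute terms with the denominator $\cquindici(N)$; this is precisely where the constants $\Csette(N,E)$ and the first summand in $\Cotto(N,E)$ come from, after simplifying the expression $\tfrac{(N-1)^2 3^{N-1}(N-1)!}{\cquindici(N)}=\tfrac{4^{N-1}}{(N-1)N!^2(N-1)!^2}$. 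The main obstacle is purely computational and requires no new ideas beyond those in Theorem~\ref{MAINTCM}.
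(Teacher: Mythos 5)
Your proposal follows exactly the route the paper prescribes: it replicates the proof of Theorem~\ref{MAINTCM} with $\kappa=2^{N/(N-1)}$ (so $\kappa^{2(N-1)}=4^N$) and with $\cuno(N,E)$ replaced by $\cquindici(N)$ from Theorem~\ref{AusiliarenonCM}, which is precisely how the paper obtains Theorem~\ref{MAINTnonCM}. The only blemish is the asserted simplification $\frac{(N-1)^2 3^{N-1}(N-1)!}{\cquindici(N)}=\frac{4^{N-1}}{(N-1)N!^2(N-1)!^2}$: the left-hand side actually equals $\frac{4^{N-2}}{(N-1)N!^2(N-1)!^2}$, which is smaller than the stated $\Csette(N,E)$, so the claimed bound still holds.
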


By a rougher estimation of the constants this theorem gives, in turn, Theorem~\ref{thm:insieme}, part \ref{caso_E^N-non-CM} of the Introduction.

\section{A lower bound for non-integral points}
We present here a version for the points of $\Ci(K)$ of the result given for $\Ci(\Q)$ by M. Stoll in the Appendix to \cite{EsMordell}. The arguments of that Appendix go through with little modification as long as the prime $\ell$ is assumed to split completely in the field $K$. Notice that there are infinitely many of such primes.

Let $E$ be an elliptic curve over~$K$ of rank~$1$ given by a
Weierstrass equation with integral coefficients.
In this section, we consider a curve $\Ci \subseteq E \times E$ that is given
by an affine equation of the form
\[ F_1(x_1,y_1) = F_2(x_2,y_2) \]
(where $(x_1,y_1)$ are the affine coordinates on the first
and $(x_2,y_2)$ on the second factor~$E$)
with polynomials $F_1, F_2 \in \Oo_K[x,y]$. Using the equation of~$E$,
we can assume that $F_j(x,y) = f_j(x) + g_j(x) y$ with univariate
polynomials $f_j, g_j \in \Oo_K[x]$. Note that $F_j$ is a rational function
on~$E$ whose only pole is at the origin~$O$ and that
$d_j := \deg F_j = \max\{2 \deg f_j, 3 + 2 \deg g_j\}$.
The \emph{leading coefficient} of~$F_j$ is the coefficient of the
term of largest degree present in~$F_j$.
We also require in the following that $d_1$ is strictly greater than~$d_2$.
Our goal in this section is to obtain a \emph{lower} bound
on the height of a point $P \in \Ci(K)$.

Let $\ell$ be an odd prime number that splits completely in $K$, and fix an extension of the $\ell$-adic absolute value to $K$. The completion of $K$ with respect to this absolute value is $\Q_\ell$ just as discussed in the Appendix to \cite{EsMordell}.

Let $S$ be a finite set of primes containing the primes dividing
the leading coefficients of $F_1$ and~$F_2$ and also the primes above $2$
if the equation defining~$E$ contains mixed terms.
We denote the ring of $S$-integers by~$\Oo_S$.

Then we obtain the following version of Theorem A.3 of \cite{EsMordell}

\begin{thm} \label{T:lowerbound}
  Consider $E$, $\Ci$ and~$S$ as above (with $d_1 > d_2$). Set
  \[ \lambda = \hat{h}(P_0) \min \{a_\ell^2 \ell^{2\lceil d_1/d_2 \rceil - 2} : \ell \notin S\text{ and $\ell$ splits completely in $K$}\}, \]
  where $P_0$ is a generator of the free part of~$E(K)$ and
  $a_\ell$ is the smallest positive integer such that $a_\ell P_0$ reduces to infinity modulo $\ell$.
  Then
  \[ \Ci(K) \subseteq \{(O,O)\} \cup \bigl(E(\Oo_S) \times E(\Oo_S)\bigr)
                     \cup \{P \in E(K) \times E(K) : \hat{h}(P) \ge \lambda\}.
  \]
\end{thm}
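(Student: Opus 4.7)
The plan is to adapt Stoll's argument from the Appendix to~\cite{EsMordell}, the one substantive change being that the auxiliary prime~$\ell$ is required to split completely in~$K$, so that the completion $K_\ell$ equals $\Q_\ell$ and the formal-group analysis can be carried out exactly as in the rational case. Take $P=(P_1,P_2)\in\Ci(K)$ different from $(O,O)$ and not in $E(\Oo_S)\times E(\Oo_S)$; after harmlessly restricting the admissible~$\ell$ to those of good reduction and coprime to $|E(K)_{\tors}|$, pick a prime $\ell\notin S$, splitting completely in~$K$, that witnesses the non-integrality of~$P$ at one of its places. Identify $K_\ell=\Q_\ell$ and work in the formal group $E^{(1)}(\Q_\ell)$ with uniformizer $z=-x/y$ at~$O$; let $k_j$ denote the depth of~$P_j$ in this formal group, with the convention $k_j=\infty$ when $P_j$ does not reduce to~$O$.

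I first show that both $k_1,k_2$ are finite and that $k_2\geq\lceil d_1/d_2\rceil$. Since $F_j=f_j(x)+g_j(x)y$ has its only pole at~$O$, of order exactly~$d_j$, and since the leading coefficients of the~$F_j$ are units at~$\ell$, the $z$-expansions $x=z^{-2}(1+\cdots)$ and $y=-z^{-3}(1+\cdots)$ give $v_\ell(F_j(P_j))=-d_jk_j$ whenever $k_j$ is finite, and $v_\ell(F_j(P_j))\geq 0$ otherwise. The equality $F_1(P_1)=F_2(P_2)$ therefore forces both $k_j$ to be finite and $d_1k_1=d_2k_2$; combined with $d_1>d_2$ and $k_1\geq 1$, this gives $k_2\geq\lceil d_1/d_2\rceil$.

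Next I translate $k_2$ into a divisibility condition on the $P_0$-index of~$P_2$. Write $P_2=n_2P_0+T_2$ with $T_2$ torsion; because $\ell$ is coprime to $|E(K)_{\tors}|$, reduction is injective on torsion, so $P_2\in E^{(1)}(\Q_\ell)$ forces both $T_2=O$ and $a_\ell\mid n_2$. For odd~$\ell$, $E^{(1)}(\Q_\ell)\cong\Z_\ell$ as topological $\Z$-module via the formal logarithm, so the depth function is additive under the $\Z$-action, and $k_2$ equals $\mathrm{depth}(a_\ell P_0)+v_\ell(n_2/a_\ell)\geq 1+v_\ell(n_2/a_\ell)$. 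Combining with $k_2\geq\lceil d_1/d_2\rceil$ gives $n_2^2\geq a_\ell^2\ell^{2\lceil d_1/d_2\rceil-2}$. Finally $\hat h(P)\geq\hat h(P_2)=n_2^2\hat h(P_0)$, and taking the minimum of the resulting lower bounds over all admissible~$\ell$ yields $\hat h(P)\geq\lambda$.

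The main technical obstacle is the equality $v_\ell(F_j(P_j))=-d_jk_j$ underpinning the first step: what is really needed is that no $\ell$-adic cancellation occurs between the leading contributions of $f_j(x)$ and of $g_j(x)y$ in the $z$-expansion. The definition of~$S$ in the statement is designed precisely to rule this out---by removing primes dividing the leading coefficients of~$F_1$ and~$F_2$, and the primes above~$2$ when the Weierstrass equation has mixed terms---so the verification is the same Newton-polygon computation as in the Appendix to~\cite{EsMordell}, now carried out inside $\Q_\ell[[z]]$; no new idea is required, only careful book-keeping of the local constraints.
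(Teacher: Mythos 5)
Your strategy is the one the paper itself intends: the paper offers no proof of Theorem~\ref{T:lowerbound} beyond the remark that Stoll's argument from the Appendix to \cite{EsMordell} goes through once $\ell$ is required to split completely in $K$, so that the relevant completion is $\Q_\ell$ and the formal-group computation is verbatim the rational one. Your outline of that computation (pole orders of the $F_j$ in the parameter $z$, the relation $d_1k_1=d_2k_2$ forcing $k_2\ge\lceil d_1/d_2\rceil$, then conversion into a divisibility condition on $n_2$) is faithful to that plan.

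Two steps do not hold up as written, however. First, the choice of the auxiliary prime: a point $P\notin E(\Oo_S)\times E(\Oo_S)$ is non-integral at some place $v\notin S$ of $K$, but nothing guarantees that the rational prime below $v$ splits completely in $K$. Your argument (like the definition of $\lambda$, whose minimum ranges only over completely split primes) says nothing about a point whose non-integral places all lie over inert or ramified primes; at such a place the formal-group analysis must be run in $K_v\neq\Q_\ell$, where multiplication by $\ell$ shifts the depth by the ramification index and the exponent in the resulting bound changes, so this case cannot simply be skipped. This tension is already present in the paper's own formulation, but a complete proof has to address it. Second, the inequality $k_2=\mathrm{depth}(a_\ell P_0)+v_\ell(n_2/a_\ell)\ge 1+v_\ell(n_2/a_\ell)$ bounds $v_\ell(n_2/a_\ell)$ from \emph{above} by $k_2-1$, not from below; to conclude $v_\ell(n_2/a_\ell)\ge\lceil d_1/d_2\rceil-1$ you need $\mathrm{depth}(a_\ell P_0)\le 1$, i.e.\ that $a_\ell P_0$ lies in $E^{(1)}(\Q_\ell)$ but not in $E^{(2)}(\Q_\ell)$, and this does not follow from the definition of $a_\ell$ as the order of the reduction of $P_0$. (Similarly, injectivity of reduction on torsion only yields $\widetilde{T_2}=-n_2\widetilde{P_0}$, not $T_2=O$.) These points must either be repaired or matched carefully against the corresponding lemmas of Stoll's appendix rather than asserted.
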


\section{Proof of Theorem~\ref{teorema:esempio}}\label{example}
Let $E$ be the elliptic curve
\[
 E:\, y^2=x^3+2.
\]
This curve $E$ has complex multiplication by a third root of $1$, given by $(x,y)\mapsto (\zeta x,y)$, with $\zeta=\frac{-1+\sqrt{-3}}{2}$. Its discriminant is 
1720 and its $j$-invariant is 0. The set $E(\Q)$ is isomorphic to $\Z$, with generator $(-1,1)$. The canonical height of the generator is $ \hat{h}(g)\approx 1.1319$.

The field of complex multiplication is $K=\Q(\zeta)$, which has discriminant $D_K=-3$. The ring of integers is $\Z[\zeta]$, so $f=1$. 

The set $E(K)$ is an $\rend$-module of rank (as an $\rend$-module) equal to the rank of $E(\Q)$ as an abelian group.

The constant $\cquattro(E)$ is bounded as $\cquattro(E)\leq 6.211$

$\cuno(2,E)=144$

\begin{align*}
 \csei(1,1,8)&\leq 6.019\\
 \cquattro(E)&\leq 6.211\\
 \cuno(2,E)&=144\\
 \Cuno(2,E)&\leq 2101\\
 \Cdue(2,E)&\leq 67638\\
 \Cquattro(2,E)&\leq0.03\\
 \Ctre(2,E)&\leq13.1
\end{align*}

By standard arguments of algebraic geometry and the theory of heights (see~\cite{EsMordell}, Corollary 7.1 and Proposition 8.1 for the details) we have that
\begin{align*}
 \deg \Ci_n&=6n+9,\\
 h_2(\Ci_n)&\leq 6\log 5 (2n+3),\\
 \deg \Di_n&=6\phi(n)+9,\\
 h_2(\Di_n)&\leq 6(2\phi(n)+3)\left(\phi(n)\log 2 +\log 5\right).
\end{align*}

From our main theorem it follows that
\begin{align}
 \notag h_2(P)&\leq 644391 \cdot (2n+3)^2+14\\
 \label{upperBexample}\hat{h}(P)&\leq 644391 \cdot (2n+3)^2+28
\end{align}
if $P\in\Ci_n$,
while
\begin{align}
 \notag h_2(P)&\leq 644391 \cdot (2\phi(n)+3)^2+14\\
 \label{upperBexampleD}\hat{h}(P)&\leq 644391 \cdot (2\phi(n)+3)^2+28
\end{align}
if  $P\in\Di_n$.

Writing
$P=(P_1,P_2)=([a]g,[b]g)$ where  $a$ and $b$ in $\Z[\zeta]$ and $g=(-1,1)$ is a generator of $E(\Q)$, we can easily bound $a,b$ in terms of the height of the point.
Indeed we have that $\hat{h}(P)=\hat h([a]g)+\hat h([b]g)=(\abs{a}^2+\abs{b}^2)\hat{h}(g)$, and for a $P\in\Ci_n$ we have that $h_W(y(P_2))= n h_W(x(P_1))$, so that
\[
  n\abs{a}^2\hat{h}(g)\ll n h_W(x([a]g)) = h_W(y([b]g))\leq h_W([b]g)\ll \hat h([b]g)\ll \abs{b}^2\hat{h}(g)
\]
from which we obtain $n \abs{a}^2\ll \hat{h}(P)$ (and analogously for $\Di_n$).

The details with the exact computation of the constants can be found in \cite{EsMordell}, Theorem 7.3, which gives

\begin{equation}\label{esempio:bound:a}\abs{a}\leq \left(\frac{3 h_2(P)+10.15 n+6}{(2n+3)\hat h(g)}\right)^{1/2}\leq 1307 (2n+3)^{1/2}\end{equation}
and
\begin{equation}\label{esempio:bound:b}|b|\leq \left(\frac{2n h_2 (P)+7.71 n+18}{(2n+3)\hat h(g)}\right)^{1/2}.\end{equation}

For the application of Theorem \ref{T:lowerbound} we can take $S=\emptyset$ and $\ell=2$ (because $-3\equiv 1\pmod 4$); $d_1$ and $d_2$ are $2n$ and $3$ respectively, and $a_2=2$ because $[2](-1,1)=(\frac{17}{4},-\frac{71}{8})$. The lower bound on the the height for a point on $\Ci(K)$ then is given by $\lambda=4^{2n/3 +1}$.
Comparing this lower bound with the upper bound \eqref{upperBexample} and using Theorem \ref{T:lowerbound} we see that for every $n>21$ we have that $\Ci_n(K)=\Ci_n(\Oo_K)$ and only the curves with $n\leq 20$ need to be checked for additional  points.

It can be checked that $E(\Oo_K)=\{\pm g, \pm \zeta g, \pm \zeta^2 g \}$, from which we obtain the points listed in the statement of Theorem~\ref{teorema:esempio} (for the values of cyclotomic polynomials evaluated at sixth roots of unity, see \cite{cycloroots}). A computer calculation (detailed in the next subsection) shows that there are no other points on $\Ci_n(K)$ for $n\leq 20$, which completes the proof of the theorem.

\subsection{Description of the algorithm}
This is the algorithm that we used to perform the computation. It is a modified version of the one described in \cite{EsMordell} and runs in \cite{PARI}. We thank Bill Allombert for his help with PARI and the coding of the algorithm.

\begin{algorithm}[H]
  \caption{Checking for rational points on $\Ci_n(K)$}
  \label{Algo}
  \begin{algorithmic}[1]
      \STATE $E:=$ the elliptic curve defined by $y^2=x^3+2$
      \STATE $\texttt{g}:=$ the point $(-1,1)\in E(\Q)$
      \STATE $\zeta$ a primitive third root of unity
      \FOR{$\texttt{n}=1$ \TO $20$}
	 \STATE $\texttt{Ma}:=$ the upper bound for $\abs{a}$ in equation \eqref{esempio:bound:a}
	 \STATE $\texttt{Mb}:=$ the upper bound for $\abs{b}$ in equation \eqref{esempio:bound:b}
	 \STATE $\texttt{p}:=7$
	 \STATE Initialise $\texttt{L}$ to a list containing all pairs of integers $(a,b)$ with $\abs{a}\leq \texttt{Ma}$ and $1\leq b\leq \texttt{Mb}$
	 \STATE $\texttt{c}:=0$
	 \WHILE{\TRUE}
	    \STATE $\texttt{g2}:=$ the reduction modulo $\texttt{p}$ of the point $\zeta \texttt{g}$
	    \STATE $\texttt{NL}:=$ the cardinality of $\texttt{L}$
	    \STATE $E_p:=$ the reduction of $E$ modulo $\texttt{p}$
	    \STATE $\texttt{Np}:=$ the cardinality of $E_p$
	    \STATE $\texttt{Mpa}:=\min(\texttt{Ma},\texttt{Np}-1)$
	    \STATE $\texttt{Mpb}:=\min(\texttt{Mb},\texttt{Np}-1)$
	    \FOR{$\texttt{a}=-\texttt{Mpa}$ \TO $\texttt{Mpa}$}
	       \STATE $\texttt{ag}:=$ the point $[\texttt{a}]\texttt{g}\in E_p$
	       \FOR{$\texttt{b}=1$ \TO $\texttt{Mpb}$}
		  \STATE $\texttt{ag}:=$ the point $\texttt{ag}+\texttt{g2}$ in $E_p$
		  \IF {$\texttt{ag}$ is the point at infinity}
		     \STATE Remove the pair $(\texttt{a},\texttt{b})$ from $\texttt{L}$
		     \STATE \textbf{next}
		  \ENDIF
		  \STATE $\texttt{x}:=$ the first coordinate of the point $\texttt{ag}$
		  \IF {The congruence $X^3+2\equiv \texttt{x}^{\texttt{2*n}} \pmod{\texttt{p}}$ has no solution}
		  \STATE Remove from $\texttt{L}$ all pairs $(a,b)$ such that $a\equiv \texttt{a}\pmod{\texttt{Np}}$ and  $b\equiv \texttt{b}\pmod{\texttt{Np}}$
		  \ENDIF
	       \ENDFOR
	    \ENDFOR
	    \IF {The cardinality of $\texttt{L}$ is equal to $\texttt{c}$}
	       \STATE $\texttt{c}:=\texttt{c}+1$
	    \ENDIF
	    \IF {The cardinality of $\texttt{L}$ is zero, \OR $\texttt{c}>15$}
	       \STATE \textbf{break}
	    \ENDIF
	    \STATE $\texttt{p}:=$ the next prime after $\texttt{p}$ which is congruent to 1 modulo 3
	 \ENDWHILE
      \ENDFOR
   \end{algorithmic}
 \end{algorithm}

The core of the algorithm is the \texttt{while} loop in line 10. This loop iterates over the prime \texttt{p}, which is always chosen to be congruent to 1 modulo 3. At each iteration the algorithm takes the list \texttt{L}, which initially contains all pairs $(a,b)$ satisfying the bounds derived from our main theorem, and checks for which of these values there exist a point $([a+b \zeta]g,P)$ on the curve $\Ci_n$ reduced modulo \texttt{p}. 
In order to perform this check, for each prime  \texttt{p} we compute the reduction modulo \texttt{p} of the point $[\zeta]g=(-\zeta,1)$ in $E(\mathbb{F}_p)$. Then in the \texttt{for} loops at lines 17 and 19 we iterate on the real and imaginary part of $\texttt{a}+\texttt{b}\zeta$ . The pairs $(\texttt{a},\texttt{b})$ that do not correspond to points modulo \texttt{p} are removed from the list \texttt{L} at line 27. The algorithm then changes the prime number \texttt{p} to the next one still congruent to 1 modulo 3, and the loop starts again.
The algorithm keeps sieving through the list \texttt{L} until either the list become empty, or 15 iterations pass without any new pair being discarded from \texttt{L}. When this happens the program outputs the values remaining in the list \texttt{L}, which are candidate solutions and need to be investigated further.

We need to select only primes congruent to 1 modulo 3, otherwise the reduction of the point $[\zeta]g$ would not be defined over $\mathbb{F}_p$.

For the curve $\Di_n$ it is enough to replace $x^{2n}$ with $\Phi_n(x)^2$ in line 26.

\section*{Acknowledgement}
We warmly thank Bill Allombert for suggesting improvements to Algorithm \ref{Algo} and writing a very fast PARI/GP implementation, suitable to our applications.
We also thank John Cremona and Angelos Koutsianas for computing for us the points of $E(\Oo_K)$.

\def\cprime{$'$}
\providecommand{\bysame}{\leavevmode\hbox to3em{\hrulefill}\thinspace}
\providecommand{\MR}{\relax\ifhmode\unskip\space\fi MR }
\providecommand{\MRhref}[2]{%
  \href{http://www.ams.org/mathscinet-getitem?mr=#1}{#2}
}
\providecommand{\href}[2]{#2}

\bigskip

\noindent \textsc{Francesco Veneziano}:\\
Dipartimento di Matematica, Universit{\`a} degli Studi di Genova,\\
Via Dodecaneso 35,\\
16146, Genova (GE), Italia.\\
email: \href{mailto:veneziano@dima.unige.it}{veneziano@dima.unige.it}																																																															

\medskip

\noindent \textsc{Evelina Viada}:\\
Mathematisches Institut,
Georg-August-Universit\"at,\\
Bunsenstra\ss e 3-5,\\
D-37073, G\"ottingen,
Germany.

\noindent email: \href{mailto:evelina.viada@math.ethz.ch}{evelina.viada@math.ethz.ch}

\end{document}